\patchcmd\Gread@eps{\@inputcheck#1 }{\@inputcheck"#1"\relax}{}{}
\renewcommand{\thesubfigure}{(\roman{subfigure})}
\makeatletter \renewcommand{\@thesubfigure}{\thesubfigure \space}
\newtheorem{theo}{Theorem}[section]
\newtheorem{lem}[theo]{Lemma}
\newtheorem{prop}[theo]{Proposition}
\newtheorem{conj}[theo]{Conjecture}
\newtheorem{defi}[theo]{Definition}
\newtheorem{prob}[theo]{Problem}
\begin{document}

\title{\bf\Large Phase Transitions of Structured Codes of Graphs}

\date{July 11, 2023}

\author{
Bo Bai\thanks{Theory Lab, Central Research Institute, 2012 Labs, Huawei Technologies Co., Ltd. China. Email: baibo8@huawei.com.}~~~~~~~
Yu Gao\thanks{Theory Lab, Central Research Institute, 2012 Labs, Huawei Technologies Co., Ltd. China. Email: gaoyu99@huawei.com.}~~~~~~~
Jie Ma\thanks{School of Mathematical Sciences, University of Science and Technology of China, Hefei, Anhui, 230026, China. Resserach supported by National Key Research and Development Program of China 2020YFA0713100, National Natural Science Foundation of China grant 12125106, and Anhui Initiative in Quantum Information Technologies grant AHY150200. Email: jiema@ustc.edu.cn.}
\thanks{Hefei National Laboratory, University of Science and Technology of China, Hefei 230088, China. Research supported by Innovation Program for Quantum Science and Technology 2021ZD0302902.}~~~~~~~
Yuze Wu\thanks{School of Mathematical Sciences, University of Science and Technology of China, Hefei, Anhui, 230026, China. Email: lttch@mail.ustc.edu.cn.}}

\maketitle

\begin{abstract}

	We consider the symmetric difference of two graphs on the same vertex set $[n]$, which is the graph on $[n]$ whose edge set consists of all edges that belong to exactly one of the two graphs. Let $\mathcal{F}$ be a class of graphs, and let $M_{\mathcal{F}}(n)$ denote the maximum possible cardinality of a family $\mathcal{G}$ of graphs on $[n]$ such that the symmetric difference of any two members in $\mathcal{G}$ belongs to $\mathcal{F}$. These concepts are recently investigated by Alon, Gujgiczer, K\"{o}rner, Milojevi\'{c}, and Simonyi, with the aim of providing a new graphic approach to coding theory. In particular, $M_{\mathcal{F}}(n)$ denotes the maximum possible size of this code.

Existing results show that as the graph class $\mathcal{F}$ changes, $M_{\mathcal{F}}(n)$ can vary from $n$ to $2^{(1+o(1))\binom{n}{2}}$. We study several phase transition problems related to $M_{\mathcal{F}}(n)$ in general settings and present a partial solution to a recent problem posed by Alon et. al.
\end{abstract}

\section{Introduction}

\noindent For decades the application of graph theory in coding theory has been a significant and productive area of research.
Gallager's early work\cite{Gal} in 1963 illustrated the potential of using graphs to construct codes with desirable properties.
This technique was significantly advanced by Tanner \cite{Tan} and further developed by Sipser and Spielman\cite{SS} (for constructing expander codes -- the first explicit code of this kind).
%Tanner \cite{Tan} and Sipser and Spielman\cite{SS} of the technique eventually brought it back to people's attention and made it even more powerful.
Since then, the application of graph-based techniques has brought about many significant discoveries in coding theory (e.g.\cite{AS},\cite{BE},\cite{BV},\cite{BZ},\cite{EK},\cite{KKL},\cite{LR},\cite{LZ},\cite{RU},\cite{Zem}),
including the recent explicit construction of locally testable codes with constant rate, constant distance, and constant locality\cite{DELLM},\cite{PK}.
	
A recent study by  Alon, Gujgiczer, K\"{o}rner, Milojevi\'{c} and Simonyi\cite{AKGMS} explores a graph-theoretic variation of the following basic problem on code distance:
how many binary sequences of a given length can be found if any two of them differ in at least a given number of coordinates?
Instead of prescribing the minimum distance of two codewords, the authors require the codewords differ in some specific structure.
In particular, they are interested in the largest family of graphs on the same vertex set such that the symmetric difference of the edge sets of any two graphs in this family has certain desired property.
	
To be formal, let $\mathcal{F}$ be a fixed class of graphs. A family $\mathcal{G}$ of graphs on the same vertex set $[n]$, where $[n]=\{1,2\dots,n\}$, is called {\it $\mathcal{F}$-good} if for any pair $G,G'\in\mathcal{G}$, their symmetric difference $G\oplus G'$, the graph with the vertex set $[n]$ and the edge set
$$E(G\oplus G')=(E(G)\setminus E(G'))\cup(E(G')\setminus E(G))$$ belongs to $\mathcal{F}$.
This family $\mathcal{G}$ is also called an {\it $\mathcal{F}$-code} since it can be viewed as a $\{0,1\}$-code of length $\binom{n}{2}$.
Let $M_{\mathcal{F}}(n)$ denote the maximum possible size of an $\mathcal{F}$-good family on $[n]$.
When the graph class $\mathcal{F}$ consists of all the graphs containing a fixed graph $L$, we use {\it $L$-code} and $M_{L}(n)$ instead of $\mathcal{F}$-code and $M_{\mathcal{F}}(n)$.
It is worth noting that there is a hidden relationship between $\mathcal{F}$-codes and expander codes that are extensively utilized.
In an expander code, each codeword corresponds to a $\{0,1\}$-edge-coloring of an expander graph (i.e., a highly connected sparse graph) such that the codeword induced by the edges adjacent to the same vertex belongs to a specified linear code. Therefore if every graph in class $\mathcal{F}$ is a spanning subgraph of an $n$-vertex expander graph, then an $\mathcal{F}$-code can be considered as a variation of expander codes.
	
The authors of \cite{AKGMS} provided accurate estimations on $M_{\mathcal{F}}(n)$ for various natural families $\mathcal{F}$ of graphs.
One of their main results determines the asymptotic behaviors of the rate of maximum $L$-codes for any fixed graph $L$ with at least one edge.
Let $\chi(L)$ be the chromatic number of $L$. The authors proved that
\begin{equation}\label{equ:ML}\tag{1}
\lim_{n\to\infty}\frac{\log M_{L}(n)}{\binom{n}{2}}=\frac{1}{\chi(L)-1},
\end{equation}
where logarithms here and in the rest of this paper are base 2.
They also studied the basic problem when $\mathcal{F}$ consists of graphs containing a given spanning tree.
Let $\mathcal{F}_{H_{P}}$ denote the class of graphs containing a Hamiltonian path and $\mathcal{F}_{S}$ denote the class of graphs containing a spanning star.
The authors of \cite{AKGMS} showed that $M_{\mathcal{F}_{H_{P}}}(n)=2^{n-1}$ for infinitely many $n$ and $M_{\mathcal{F}_{S}}(n)\in\{n,n+1\}$ for all positive integers $n$.
%(i.e., there are $2^{n-1}$ different graphs on $[n]$ such that the symmetric difference of any two of them contains a Hamiltonian path) (i.e., there are $n$ or $n+1$ different graphs on $[n]$ such that the symmetric difference of any two of them contains a spanning star)
In particular, the above results reveal that as the graph family $\mathcal{F}$ changes, the value of $M_{\mathcal{F}}(n)$ can have a significant discrepancy, ranging from $n$ to $2^{(1+o(1))\binom{n}{2}}$.\footnote{If $L$ is a bipartite graph, then \eqref{equ:ML} gives the latter bound $2^{(1+o(1))\binom{n}{2}}$.}
Inspired by this intriguing phenomenon,
in this paper we aim to investigate phase transition problems on $M_{\mathcal{F}}(n)$ in general settings.

%One of our results provides a partial solution to a problem posed in \cite{AKGMS}.

%Their work represents an exciting new connection between coding theory and graph theory and opens up new possibilities for interdisciplinary research.

Observe the aforementioned results on tress are about two extreme cases of spanning trees. %(with significantly difference outcomes).
The authors of \cite{AKGMS} raised the following problem.
	
\begin{prob}[Problem 3 in \cite{AKGMS}]\label{P3}
For what ``natural'' sequences $\{T_{i}\}_{i\geq 1}$ of trees (with $T_{i}$ having exactly $i$ vertices for every $i$) will the value of $M_{T_{n}}(n)$ grow only linearly in $n$?
A similar question is valid if $T_{i}$ is replaced by $\mathcal{T}_{i}$, some ``natural'' family of $i$-vertex trees.
\end{prob}

Our first result provides a partial solution to this problem, by indicating that the above problem is closely related to the number of leaves of the trees.
Let $\mathcal{F}_{\ell}$ denote the family of graphs containing a spanning tree that has exactly $\ell$ leaves.
	
\begin{theo}\label{Tree.Th}
For infinitely many $n$ and all integers $3\leq\ell \leq\frac{n-1}{12 \log n}+2$, we have
	$$M_{\mathcal{F}_{\ell}}(n)\geq 2^{n-2}$$
In particular, this holds whenever $n\geq 64$ and $n=p$ or $n=2p-1$ for odd primes $p$.
\end{theo}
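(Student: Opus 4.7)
View graphs on $[n]$ as vectors in $\mathbb{F}_2^{\binom{n}{2}}$, with symmetric difference playing the role of addition; the desired family of size $2^{n-2}$ may be taken to be an affine subspace. Translating so that $\emptyset \in \mathcal{G}$ reduces the theorem to constructing a linear subspace $V \subseteq \mathbb{F}_2^{\binom{n}{2}}$ of dimension $n-2$ whose every nonzero codeword, viewed as a graph on $[n]$, contains a spanning tree with exactly $\ell$ leaves. My starting point is the construction from \cite{AKGMS} establishing $M_{\mathcal{F}_{H_{P}}}(n) = 2^{n-1}$ for precisely the values $n = p$ and $n = 2p-1$ considered here. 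After a short inspection of that construction, one obtains a linear code $V_0 \subseteq \mathbb{F}_2^{\binom{n}{2}}$ of dimension $n-1$ whose every nonzero codeword contains a Hamiltonian path, and I would take $V$ to be a suitably chosen codimension-$1$ subspace of $V_0$.

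The combinatorial core is a \emph{chord-surgery lemma}: if $G$ contains a Hamiltonian path $P = v_1 v_2 \cdots v_n$ together with a chord $v_i v_j$ (an edge with $j \geq i+2$) different from $v_1 v_n$, then one may obtain a spanning tree of $G$ having exactly one more leaf than $P$. Concretely, either delete $v_i v_{i+1}$ and insert $v_i v_j$ (which creates the new leaf $v_{i+1}$ whenever $j < n$), or symmetrically delete $v_{j-1} v_j$ and insert $v_i v_j$ (which creates $v_{j-1}$ whenever $i > 1$). Iterating this surgery with $\ell - 2$ chord edges whose resulting new-leaf positions are pairwise non-adjacent along $P$ produces a spanning tree with exactly $\ell$ leaves.

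The main step is then to ensure that every nonzero $c \in V$ yields a graph $G_c$ possessing such $\ell-2$ non-interfering chords relative to its Hamiltonian path $P_c$. I expect this to follow from a probabilistic restriction: take $V$ to be a uniformly random codimension-$1$ subspace of $V_0$; for each fixed codeword the failure probability can be bounded by $2^{-\Omega(n)}$, and a union bound over the $2^{n-2}-1$ codewords of $V$ closes exactly under the quantitative assumption $\ell \leq \frac{n-1}{12 \log n} + 2$. The $12 \log n$ factor represents an effective "information-theoretic cost" per extra leaf, obtained by balancing $2^{n-2}$ codewords against the number of chord configurations of size $\ell-2$. This is where I anticipate the main technical difficulty, since it requires enough independence or structural regularity of chord appearances across the codewords of $V_0$, which I expect to follow from the algebraic (cyclic-difference) structure of the construction in \cite{AKGMS}; the constraint $n \geq 64$ absorbs the unavoidable constant-factor losses.

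For $n = 2p-1$, the construction of $V_0$ in \cite{AKGMS} extends the $n = p$ case by incorporating a structured configuration involving one additional vertex; the same chord-surgery lemma and probabilistic restriction argument then apply \emph{mutatis mutandis}, completing both cases of the theorem.
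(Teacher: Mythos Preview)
Your chord-surgery idea matches the paper's final step, but the probabilistic restriction to a random hyperplane of $V_0$ cannot work, and the gap is not a matter of constants.

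Concretely, in the construction from \cite{AKGMS} the code $V_0$ is the $\mathbb{F}_2$-span of $\{M_1+M_i : 2\le i\le n\}$, where $M_1,\dots,M_n$ are the (near-)perfect matchings coming from a perfect $1$-factorization; its nonzero codewords are exactly the unions $\bigcup_{i\in S}M_i$ with $|S|$ even and $S\neq\emptyset$. The ``bad'' codewords are those with $|S|=2$: each $M_i\cup M_j$ is \emph{precisely} a Hamiltonian path on $[n]$ with no chords at all, so no amount of surgery can raise its leaf count above~$2$. Whether such a codeword has $\ell-2$ non-interfering chords is a deterministic property of the codeword, not of the random hyperplane; thus its ``failure probability'' is~$1$, not $2^{-\Omega(n)}$, and each of the $\binom{n}{2}$ bad codewords lands in a random hyperplane with probability~$\approx\tfrac12$. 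Worse, \emph{no} hyperplane avoids them all: any linear functional on $V_0$ extends to $f(M_i)=a_i\in\mathbb{F}_2$, and $f(M_i+M_j)=a_i+a_j$; requiring this to be~$1$ for all $i\neq j$ forces the $a_i$ to be pairwise distinct in $\mathbb{F}_2$, impossible for $n\ge 3$. So passing to a codimension-$1$ subspace can never produce a code all of whose nonzero words have chords.

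This is exactly why the paper does \emph{not} restrict $V_0$ but instead modifies it: it sets aside $M_n$ as a reservoir of chord edges, works with even unions from $\{M_1,\dots,M_{n-1}\}$ (already size $2^{n-2}$), partitions this family into $\binom{n-1}{2}+1$ colour classes so that within a class every symmetric difference uses at least four matchings (hence already has plenty of chords), and then decorates each class with a distinct subgraph $H_i\subseteq M_n$ chosen via the Gilbert--Varshamov bound so that across classes the $H_i\oplus H_j$ supply at least $3\ell-5$ disjoint chord edges. The bound $\ell\le\frac{n-1}{12\log n}+2$ arises from the Gilbert--Varshamov calculation ensuring $|\mathcal H|\ge\binom{n-1}{2}+1$, not from any union bound over codewords. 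Your surgery lemma is correct and is used essentially verbatim in the paper's final step; what is missing from your plan is a mechanism that actually injects chords into every symmetric difference.
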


Let $\mathcal{F}_{c}$ denote the class of all connected graphs. It was proved in \cite{AKGMS} that $M_{\mathcal{F}_{c}}(n)=2^{n-1}$,
which implies that $M_{\mathcal{F}_{\ell}}(n)\leq 2^{n-1}$ holds for all $2\leq\ell\leq n-1$.
Hence, Theorem~\ref{Tree.Th} is tight up to a factor of two.
It is interesting to determine the precise values of $M_{\mathcal{F}_{\ell}}(n)$.
We will discuss in the concluding remark that the proof of Theorem~\ref{Tree.Th} in fact shows that the family $\mathcal{T}_{\ell}$ consisting of all spanning trees with $\ell$ leaves for any $2\leq \ell\leq \frac{n+9}{6}$ can not provide a positive answer to Problem~\ref{P3}.

%We have seen the rate limit of maximum $L$-codes in \eqref{equ:ML} and  the rate limit of maximum $\mathcal{F}_{c}$-codes is 0 (this is because $M_{\mathcal{F}_{c}}(n)=2^{n-1}$).

%a local condition, while computing the symmetric difference of any two codewords to be connected is a global condition.
%Therefore, we have discovered another significant discrepancy between the results of local and global conditions and we are interested in figuring out the ``transition point" between them. We study two types of transition points:
%\begin{itemize}
%	\item The containment of at least $k$ (vertex-disjoint) copies of the given graph $L$.
%	\item The containment of a graph that expands as $n$ increases.
%\end{itemize}

We now consider some ``robust'' generalizations of \eqref{equ:ML}.
For a given graph $L$, let $M_{L}(n,k)$ denote the largest cardinality of a family $\mathcal{G}$ of graphs on $[n]$, such that the symmetric difference of any two members of $\mathcal{G}$ contains at least $k$ copies of $L$.
Let $v(L)$ and $e(L)$ denote the number of vertices and edges in $L$, respectively.
The following result shows that there is a phase transition in terms of $k=k(n)$ for the asymptotic behavior of $\lim_{n\to\infty}\log M_{L}(n,k)/\binom{n}{2}$.
	
\begin{theo}\label{kcopy.Th}
	Let $L$ be any graph with at least one edge. If $k=o(n^{v(L)})$, then we have
	$$\lim_{n\to\infty}\frac{\log M_{L}(n,k)}{\binom{n}{2}}=\frac{1}{\chi(L)-1}.$$
	If $k=cn^{v(L)}$ for some constant $c>0$, then we have
	$$\lim_{n\to\infty}\frac{\log M_{L}(n,k)}{\binom{n}{2}}\leq \frac{1}{\chi(L)-1}-\frac{2c}{e(L)}.$$	
\end{theo}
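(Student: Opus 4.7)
The first statement combines \eqref{equ:ML} with a random construction. The upper bound $M_L(n,k)\leq M_L(n)$ is immediate, yielding the rate cap $1/(\chi(L)-1)$ via \eqref{equ:ML}. For the matching lower bound, take $N=2^{(1/(\chi(L)-1)-\varepsilon)\binom{n}{2}}$ independent copies of $G(n,1/2)$; the pairwise symmetric differences are again distributed as $G(n,1/2)$. The Kleitman--Rothschild-type enumeration, combined with the Erd\H{o}s--Frankl--R\"odl stability theorem (or equivalently the graph removal lemma), shows that the number of $n$-vertex graphs with fewer than $o(n^{v(L)})$ copies of $L$ is $2^{\mathrm{ex}(n,L)+o(n^{2})}$; hence the probability that any particular symmetric difference fails to contain $k=o(n^{v(L)})$ copies of $L$ is at most $2^{-(1/(\chi(L)-1)-o(1))\binom{n}{2}}$. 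A union bound over $\binom{N}{2}$ pairs plus one-sided deletion leaves $(1-o(1))N$ graphs.

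For part (ii) the plan is a restriction argument: construct a graph $H^{*}\subseteq K_n$ with $N_L(H^{*})<k$ and as many edges as possible, and project $\mathcal{G}$ onto the complement of $E(H^{*})$. Concretely, let $H^{*}=T_{\chi(L)-1}(n)\cup M$, where $T_{\chi(L)-1}(n)$ is the Tur\'an graph (hence $L$-free) and $M$ is a set of $m=\lfloor(c-\varepsilon)n^{2}/e(L)\rfloor$ extra edges placed inside a single part of $T_{\chi(L)-1}(n)$. A direct count of injections $V(L)\to[n]$ that send some edge of $L$ to a specified edge of $K_n$ shows that any fixed edge of $K_n$ lies in exactly $2e(L)(n-2)_{v(L)-2}/|\mathrm{Aut}(L)|$ unlabeled copies of $L$ in $K_n$; assuming $|\mathrm{Aut}(L)|\geq 2$ this is at most $e(L)n^{v(L)-2}$. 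Since every copy of $L$ in $H^{*}$ must use at least one edge of $M$, the union bound yields
\[
N_L(H^{*})\leq m\cdot e(L)n^{v(L)-2}\leq (c-\varepsilon)n^{v(L)}<cn^{v(L)}=k.
\]

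Now set $R=E(K_n)\setminus E(H^{*})$. If two distinct members $G,G'\in\mathcal{G}$ agreed on $R$, then $G\oplus G'\subseteq E(H^{*})$ would contain fewer than $k$ copies of $L$, contradicting the defining property of $\mathcal{G}$. Thus $G\mapsto G|_{R}$ is injective and $|\mathcal{G}|\leq 2^{|R|}$. Since $e(T_{\chi(L)-1}(n))=(1-1/(\chi(L)-1))\binom{n}{2}+O(n)$, we obtain
\[
\frac{\log|\mathcal{G}|}{\binom{n}{2}}\leq \frac{|R|}{\binom{n}{2}}=\frac{1}{\chi(L)-1}-\frac{2(c-\varepsilon)}{e(L)}+o(1),
\]
and sending $n\to\infty$ and then $\varepsilon\to 0$ gives the claimed bound.

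The main technical point I anticipate is the per-edge $L$-count: the estimate above costs a factor $|\mathrm{Aut}(L)|/2$, which is harmless for every $L$ with a non-trivial automorphism (in particular for complete graphs, cycles, and all symmetric $L$ likely to arise in applications). For an asymmetric $L$ one must place the edges of $M$ more carefully, exploiting the chromatic structure: most injections $V(L)\to[n]$ fail to give a homomorphism into $T_{\chi(L)-1}(n)\cup\{e\}$ because the edge $e$ forces its two endpoints into a common color class, so a sharper coloring-sensitive count of extensions should recover the missing factor of two. A secondary bookkeeping point is ensuring that the $o(n^{2})$ error in the Erd\H{o}s--Stone estimate of $\mathrm{ex}(n,L)$ is absorbed by the $\varepsilon$-slack before taking limits, which is the reason the argument shrinks $c$ to $c-\varepsilon$ in the construction of $H^{*}$.
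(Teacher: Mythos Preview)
Your proposal follows essentially the same route as the paper: for part~(i) both arguments use the graph removal lemma to show that the number of $n$-vertex graphs with fewer than $k$ copies of $L$ is $2^{(1-1/(\chi(L)-1)+o(1))\binom{n}{2}}$ and then extract the lower bound (your random sampling with one-sided deletion is the probabilistic twin of the paper's deterministic bound $\alpha(G_{L,k})\ge |V(G_{L,k})|/(\Delta+1)$ on the $F_n(L,k)$-regular Cayley graph $G_{L,k}$); for part~(ii) both add $\Theta(cn^{2}/e(L))$ edges to an extremal $L$-free graph and then invoke the duality $M_{\mathcal F}(n)\,D_{\mathcal F}(n)\le 2^{\binom{n}{2}}$, which is precisely your projection-to-$R$ injection. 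Your caution about the factor $|\mathrm{Aut}(L)|$ in the per-edge count is well placed---the paper simply asserts the bound $e(L)n^{v(L)-2}$ without isolating the asymmetric case---so on that point you are in fact more careful than the published proof.
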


We also consider the corresponding version for vertex-disjoint copies and obtain the following phase transition.
For a given graph $L$, let $M_{k\cdot L}(n)$ denote the largest cardinality of a family $\mathcal{G}$ of graphs on $[n]$, such that the symmetric difference of any two members of $\mathcal{G}$ contains at least $k$ vertex-disjoint copies of $L$.

\begin{theo}\label{kvexd.Th}
	Let $L$ be any graph with at least one edge. If $k=o(n)$, then we have
	$$\lim_{n\to\infty}\frac{\log M_{k\cdot L}(n)}{\binom{n}{2}}=\frac{1}{\chi(L)-1}.$$
	If $k=cn$ for some constant $c>0$, then we have
	$$\lim_{n\to\infty}\frac{\log M_{k\cdot L}(n)}{\binom{n}{2}}\leq \frac{(1-c)^{2}}{\chi(L)-1}.$$
\end{theo}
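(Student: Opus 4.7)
\noindent\textbf{Proof plan for Theorem~\ref{kvexd.Th}.}
My plan is to handle the two regimes separately while using \eqref{equ:ML} and Theorem~\ref{kcopy.Th} as black boxes. The central idea, shared by both parts, is to \emph{restrict a $(k\cdot L)$-code to a carefully chosen subset of vertices on which it remains an $L$-code}. For the sub-linear regime I will additionally need to convert ``many copies'' into ``many vertex-disjoint copies'' via a standard greedy deletion argument.

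\medskip

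\noindent\emph{Part 1 ($k=o(n)$).} The upper bound is immediate: a symmetric difference containing $k\geq 1$ vertex-disjoint copies of $L$ trivially contains at least one copy of $L$, so $M_{k\cdot L}(n)\leq M_{L}(n)$, and \eqref{equ:ML} gives $\log M_{k\cdot L}(n)/\binom{n}{2}\leq 1/(\chi(L)-1)+o(1)$. For the matching lower bound, I would invoke Theorem~\ref{kcopy.Th}. The elementary fact I need is that any graph on $n$ vertices containing at least $m$ copies of $L$ has $\lfloor m/(v(L)\cdot C_{L}n^{v(L)-1})\rfloor$ pairwise vertex-disjoint copies of $L$, because every vertex lies in at most $C_{L}n^{v(L)-1}$ copies for a constant $C_{L}$ depending only on $L$, so deleting the $v(L)$ vertices of one chosen copy destroys at most $v(L)C_{L}n^{v(L)-1}$ of the remaining copies and we may then iterate. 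Setting $m:=kv(L)C_{L}n^{v(L)-1}$, we have $m=o(n^{v(L)})$ when $k=o(n)$, and any family whose pairwise symmetric differences contain at least $m$ copies of $L$ is automatically a $(k\cdot L)$-code. Hence $M_{k\cdot L}(n)\geq M_{L}(n,m)$, and the first part of Theorem~\ref{kcopy.Th} yields $M_{L}(n,m)=2^{(1/(\chi(L)-1)+o(1))\binom{n}{2}}$, as needed.

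\medskip

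\noindent\emph{Part 2 ($k=cn$).} This is the main part. Let $\mathcal{G}$ be a $(k\cdot L)$-code of maximum size, and fix any $U\subseteq[n]$ with $|[n]\setminus U|=k-1=cn-1$, so that $|U|=(1-c)n+1$. For any two distinct $G,G'\in\mathcal{G}$, the symmetric difference $G\oplus G'$ contains $k$ pairwise vertex-disjoint copies of $L$. Since the copies are vertex-disjoint, each vertex of $[n]\setminus U$ lies in at most one of them, so at most $k-1$ of these copies can meet $[n]\setminus U$, and hence at least one copy is entirely contained in $U$. Therefore $(G\oplus G')|_{U}=G|_{U}\oplus G'|_{U}$ contains $L$. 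This means the restriction map $G\mapsto G|_{U}$ is injective on $\mathcal{G}$ and its image is an $L$-code on the vertex set $U$; applying \eqref{equ:ML} on $U$ gives $|\mathcal{G}|\leq M_{L}(|U|)\leq 2^{(1/(\chi(L)-1)+o(1))\binom{|U|}{2}}$, and the conclusion follows from $\binom{|U|}{2}/\binom{n}{2}\to (1-c)^{2}$.

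\medskip

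\noindent\emph{Main obstacle.} The proof is quite direct once the right reduction is spotted; the only delicate point is the precise choice $|[n]\setminus U|=k-1$, which is the largest value for which a pigeonhole on the discarded vertices still guarantees that at least one of the $k$ vertex-disjoint copies in every pairwise symmetric difference survives restriction to $U$. The main conceptual step is therefore the observation that having ``$k=cn$ vertex-disjoint $L$-copies in every pairwise symmetric difference'' lets one delete $cn-1$ vertices and retain an honest $L$-code on the survivor set of size $(1-c)n+1$, after which \eqref{equ:ML} supplies the rest.
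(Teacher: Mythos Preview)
Your proof is correct. For Part~1, your argument and the paper's coincide in substance: both rest on the greedy observation that a graph lacking $k$ vertex-disjoint copies of $L$ can have at most $O(kn^{v(L)-1})=o(n^{v(L)})$ copies in total. You then invoke Theorem~\ref{kcopy.Th} as a black box to get $M_{k\cdot L}(n)\geq M_L(n,m)$, whereas the paper phrases the same containment as $F_n(k\cdot L)\leq F_n(L,\,k(v(L)n^{v(L)-1}+1))$ and re-runs the regular-graph/independence-number computation from the proof of Theorem~\ref{kcopy.Th}. These are the same argument packaged slightly differently.

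For Part~2, however, your route is genuinely different from the paper's. The paper works on the \emph{dual} side: it constructs an explicit $n$-vertex graph $H$ consisting of a clique on a set $S$ of $k-1$ vertices, an extremal $L$-free graph on $[n]\setminus S$, and all cross-edges; every copy of $L$ in $H$ must meet $S$, so $H$ has at most $k-1$ vertex-disjoint copies of $L$, whence the $2^{e(H)}$ subgraphs of $H$ witness $D_{k\cdot L}(n)\geq 2^{e(H)}$ and Lemma~\ref{Dual.L} yields the bound. You work on the \emph{primal} side: deleting any $k-1$ vertices, you note that the restriction of a $(k\cdot L)$-code to the surviving $(1-c)n+1$ vertices is (injectively) an $L$-code there, giving $M_{k\cdot L}(n)\leq M_L((1-c)n+1)$, after which \eqref{equ:ML} finishes. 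The two arguments are mirror images---both pivot on a set of $k-1$ vertices that can meet at most $k-1$ of the disjoint copies---but yours bypasses the dual lemma and the Tur\'an-number computation entirely, reducing straight to the known value of $M_L$; the paper's version, in exchange, produces an explicit large $\overline{\mathcal F}$-good family and stays within the $M_{\mathcal F}\cdot D_{\mathcal F}$ framework used throughout the paper.
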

	
For bipartite graphs, we can obtain the following analogs, which also provide some improvements over \eqref{equ:ML}.

\begin{theo}\label{ComBi.Th}
	If $t=o(\log n)$, then we have
	$$\lim_{n\to\infty}\frac{\log M_{K_{t,t}}(n)}{\binom{n}{2}}=1.$$
	If $t=c\log n$ for some constant $c>0$, then we have
	$$\lim_{n\to\infty}\frac{\log M_{K_{t,t}}(n)}{\binom{n}{2}}\leq1-2^{-\frac{2}{c}}.$$
\end{theo}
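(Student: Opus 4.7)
The plan is to prove both parts via a single translate-based upper bound relating $M_{K_{t,t}}(n)$ to $\ex(n,K_{t,t})$; the first part additionally needs a matching random-code lower bound, while the second part's upper bound is the main new content.

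\emph{Translate bound.} If $\mathcal{G}$ is any $K_{t,t}$-code on $[n]$ and $H_0$ is any $K_{t,t}$-free graph on $[n]$, then the family $\mathcal{F}=\{F:F\subseteq H_0\}$ of subgraphs of $H_0$ has size $2^{|H_0|}$, and the translates $\mathcal{G}\oplus F$, $F\in\mathcal{F}$, are pairwise disjoint in $\mathbb{F}_2^{\binom{n}{2}}$. Indeed, for any $F_1,F_2\in\mathcal{F}$ the XOR $F_1\oplus F_2$ is a subgraph of $H_0$ and hence $K_{t,t}$-free, so an equality $G_1\oplus F_1=G_2\oplus F_2$ with $F_1\ne F_2$ would force $G_1\oplus G_2=F_1\oplus F_2$ to be a nonzero $K_{t,t}$-free graph, contradicting either the code property (when $G_1\ne G_2$) or the assumption $F_1\ne F_2$ (when $G_1=G_2$). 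This gives $|\mathcal{G}|\cdot 2^{|H_0|}\le 2^{\binom{n}{2}}$, i.e.
\[
\log M_{K_{t,t}}(n)\le\binom{n}{2}-|H_0|.
\]

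\emph{Upper bound when $t=c\log n$.} To maximize $|H_0|$ I would use the classical probabilistic deletion method: take $G(n,p)$ with $p=n^{-2/(t+1)}$ and delete one edge from each copy of $K_{t,t}$. A first-moment computation shows that this choice of $p$ yields expected remaining edges $(1-1/t^2)\binom{n}{2}p$ up to a Stirling factor $[(t!)^2/t^2]^{1/(t^2-1)}$ that tends to $1$ as $t\to\infty$, producing a $K_{t,t}$-free graph $H_0$ with $|H_0|\ge(1-o(1))\,n^{-2/(t+1)}\binom{n}{2}$. Since $n^{-2/(t+1)}\to 2^{-2/c}$ under $t=c\log n$, the translate bound yields $\log M_{K_{t,t}}(n)/\binom{n}{2}\le 1-2^{-2/c}+o(1)$, as required.

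\emph{Lower bound when $t=o(\log n)$.} The first-part equality requires a matching lower bound. By K\H{o}v\'ari--S\'os--Tur\'an, $\ex(n,K_{t,t})=O(n^{2-1/t})=o(\binom{n}{2})$ whenever $t=o(\log n)$; since every $K_{t,t}$-free graph on $[n]$ has at most $\ex(n,K_{t,t})$ edges, the entropy bound gives $F(n,t)\le\sum_{k\le\ex(n,K_{t,t})}\binom{\binom{n}{2}}{k}\le 2^{o(\binom{n}{2})}$ for the number $F(n,t)$ of $K_{t,t}$-free graphs on $[n]$. Probabilistic deletion at the code level now works: sampling $N=2^{(1-\varepsilon)\binom{n}{2}}$ uniform random graphs on $[n]$, the XOR of any pair is distributed as $G(n,1/2)$ and is $K_{t,t}$-free with probability $F(n,t)/2^{\binom{n}{2}}=2^{-(1-o(1))\binom{n}{2}}$; the expected number of bad pairs is at most $N/4$, so removing one graph per bad pair leaves at least $N/2$ graphs forming a $K_{t,t}$-code, giving $\log M_{K_{t,t}}(n)/\binom{n}{2}\ge 1-o(1)$.

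\emph{Main obstacle.} The translate argument itself is short and elementary; the principal technical content lies in the sharp probabilistic construction of $H_0$. Obtaining the exact constant $2^{-2/c}$ rather than a constant-factor weaker density requires checking that both the $(1-1/t^2)$-loss from deletion and the Stirling factor $[(t!)^2/t^2]^{1/(t^2-1)}$ tend to $1$ when $t=c\log n\to\infty$, so that the density $n^{-2/(t+1)}$ is genuinely attained.
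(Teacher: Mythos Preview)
Your proof is correct and follows essentially the same route as the paper: the translate bound you prove is exactly the paper's dual inequality $M_{\mathcal F}(n)D_{\mathcal F}(n)\le 2^{\binom{n}{2}}$ specialized to the family of subgraphs of a fixed $K_{t,t}$-free $H_0$, and both the lower bound (counting $K_{t,t}$-free graphs via K\H{o}v\'ari--S\'os--Tur\'an) and the upper bound (probabilistic deletion to build a dense $K_{t,t}$-free $H_0$) match the paper's strategy. The only cosmetic differences are that the paper obtains the lower bound from the greedy inequality $\alpha(G)\ge |V(G)|/(\Delta(G)+1)$ rather than random selection with deletion, and for the upper bound it takes edge probability $p=2^{-2/c}=n^{-2/t}$ instead of your $p=n^{-2/(t+1)}$; their choice gives $\mathbb E[\#K_{t,t}]<1$ on the nose, so no Stirling-factor asymptotics are needed, but the conclusions are identical.
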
	
	
\begin{theo}\label{AnoBi.Th}
	Let $L(r,m)=(A\cup B,E)$ be a connected bipartite graph on $m$ vertices such that any vertex in $A$ has at most $r$ neighbors in $B$.
	If $m=O(n^{1-\varepsilon})$ for some constant $\varepsilon>0$, then for any constant integer $r$, we have
	$$\lim_{n\to\infty}\frac{\log M_{L(r,m)}(n)}{\binom{n}{2}}=1.$$
\end{theo}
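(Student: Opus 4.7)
The upper bound $\limsup \log M_{L(r,m)}(n)/\binom{n}{2}\le 1$ is immediate from $M_{L(r,m)}(n)\le 2^{\binom{n}{2}}$. For the matching lower bound, my plan is to prove an embedding lemma stating that every graph on $[n]$ with sufficiently many edges contains $L(r,m)$, use it to bound the number of $L(r,m)$-free graphs by $2^{o(n^2)}$, and then invoke a random linear-code construction to produce an $L(r,m)$-code of size $2^{(1-o(1))\binom{n}{2}}$.

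\textbf{Embedding via dependent random choice.} The core of the argument is the claim that there exists a constant $\delta=\delta(r,\varepsilon)>0$ such that every $n$-vertex graph $G$ with $e(G)\ge n^{2-\delta}$ contains $L(r,m)$. Since each vertex of $A$ has at most $r$ neighbors in $B$, ordering the vertices of $L$ with $B$ before $A$ yields an order in which every vertex has at most $r$ earlier neighbors. I apply dependent random choice with parameter $t=\lceil(r-1)/\varepsilon\rceil+1$, a constant depending only on $r$ and $\varepsilon$. A standard computation (choose $T\subseteq V(G)$ uniformly at random with $|T|=t$ and set $A_T=\bigcap_{v\in T}N_G(v)$) gives $\mathbb{E}|A_T|\ge d^t/n^{t-1}$ while the expected number of $r$-subsets of $A_T$ with fewer than $m$ common neighbors is at most $\binom{n}{r}(m/n)^t$; the hypothesis $m=O(n^{1-\varepsilon})$ with our choice of $t$ makes this error term be $o(m)$. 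Hence if $d^t/n^{t-1}\ge 2m$, one can delete at most one vertex from each ``bad'' $r$-subset to obtain a set $U\subseteq V(G)$ of size $\ge m$ in which every subset of size at most $r$ has at least $m$ common neighbors in $G$. Now embed $B$ into $U$ arbitrarily and embed each $a\in A$ greedily at an unused vertex of $\bigcap_{b'}N_G(f(b'))$ over its at most $r$ already-embedded neighbors $b'$; such a vertex exists because the common neighborhood has size $\ge m$, which exceeds the number of previously used vertices. Finally, $d^t/n^{t-1}\ge 2m$ translates to $e(G)\ge n^{2-\delta}$ with $\delta=\varepsilon/t$.

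\textbf{Counting and the random code.} With the embedding lemma in hand, every $L(r,m)$-free graph on $[n]$ has fewer than $n^{2-\delta}$ edges, so the number $N$ of such graphs satisfies
\[
N \le \sum_{k\le n^{2-\delta}}\binom{\binom{n}{2}}{k}\le 2^{O(n^{2-\delta}\log n)} = 2^{o(n^2)}.
\]
To construct the code, let $H$ be a uniformly random $k\times\binom{n}{2}$ matrix over $\mathbb{F}_2$ with $k=\lceil\log_2 N\rceil+1=o(n^2)$, and set $\mathcal{G}=\ker H\subseteq\mathbb{F}_2^{\binom{n}{2}}$. For each nonzero $L(r,m)$-free vector $y$ we have $\Pr[y\in\mathcal{G}]=2^{-k}$, so a union bound gives $\Pr[\mathcal{G}\setminus\{0\}\text{ contains an }L(r,m)\text{-free element}]\le N\cdot 2^{-k}<1$. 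Hence some $\mathcal{G}$ has the property that every nonzero element contains $L(r,m)$; as $G_1\oplus G_2\in\mathcal{G}\setminus\{0\}$ for any distinct $G_1,G_2\in\mathcal{G}$, this $\mathcal{G}$ is an $L(r,m)$-code of size $2^{\binom{n}{2}-k}=2^{(1-o(1))\binom{n}{2}}$, completing the proof.

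\textbf{Main obstacle.} The delicate step is balancing the DRC parameter $t$: one needs $d^t/n^{t-1}$ to dominate $m$ and simultaneously $\binom{n}{r}(m/n)^t$ to stay below $m$. The polynomial gap $m=O(n^{1-\varepsilon})$ is precisely what allows a constant $t$ (depending only on $r$ and $\varepsilon$) to achieve both; weakening it to $m=n^{1-o(1)}$ would force $t$ to grow with $n$ and break the argument. The connectedness of $L$ is not used in the embedding and is simply the natural setting of the problem.
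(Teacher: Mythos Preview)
Your argument is correct and follows essentially the same route as the paper: both use dependent random choice to show that every $n$-vertex graph with at least $n^{2-\delta}$ edges (for a constant $\delta=\delta(r,\varepsilon)>0$) contains $L(r,m)$, deduce that the number of $L(r,m)$-free graphs is $2^{o(n^2)}$, and then convert this count into a lower bound on $M_{L(r,m)}(n)$. The only noteworthy difference is in this last conversion step: the paper views the set of all graphs on $[n]$ as the vertex set of an auxiliary graph in which two graphs are adjacent when their symmetric difference is $L(r,m)$-free, observes it is $F_n(L(r,m))$-regular, and applies $\alpha\ge v/(\Delta+1)$; you instead build a random linear code $\mathcal{G}=\ker H$ and use a union bound over $L(r,m)$-free vectors. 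Both devices yield $M_{L(r,m)}(n)\ge 2^{(1-o(1))\binom{n}{2}}$, and your version has the small bonus of producing a linear code. Two cosmetic points: your choice $t=\lceil(r-1)/\varepsilon\rceil+1$ can fail to give the strict inequality needed for the error term to be $o(m)$ when $(r-1)/\varepsilon$ is an integer (taking $t$ one larger fixes this), and $|\ker H|=2^{\binom{n}{2}-\mathrm{rank}(H)}\ge 2^{\binom{n}{2}-k}$ rather than equality; neither affects the conclusion.
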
	

Since any graph on vertex set $[n]$ can be viewed as a spanning subgraph of $K_{n}$, what if we replace $K_{n}$ with some alternative graphs on $[n]$? This raises the question of determining the maximum number of spanning subgraphs of a fixed graph $G$, with the restriction that the symmetric difference of any two of them belongs to a fixed graph class $\mathcal{F}$.
The most natural instance of this problem that comes to mind is
when $\mathcal{F}$ denotes the family of all connected graphs and $G$ denotes an $m\times n$ grid.
Here, an {\it $m\times n$ grid}, 
denoted by $G_{m,n}$, is the graph with vertex set $[m]\times [n]$ and with edges between $(u,v)$ and $(i,j)$ if and only if $u=i$ and $v\equiv j\pm1(\textrm{mod } n)$ or $v=j$ and $u\equiv i\pm1(\textrm{mod } m)$.
We have the following.
	
\begin{prop}\label{grid.Pro}
	For any integers $m,n\geq 3$, let $M_{\mathcal{F}_{c}}(G_{m,n})$ denote the maximum possible size of a family $\mathcal{G}$ of spanning subgraphs of $G_{m,n}$ such that the symmetric difference of any two members in $\mathcal{G}$ is connected, then we have $M_{\mathcal{F}_{c}}(G_{m,n})\leq 16$. Especially, we also have $M_{\mathcal{F}_{c}}(G_{m,n})=16$ for $m=n=3$.
\end{prop}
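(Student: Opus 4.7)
The plan is to prove the upper bound $M_{\mathcal{F}_c}(G_{m,n})\leq 16$ by a short local-view pigeonhole, and then establish the matching lower bound at $m=n=3$ by exhibiting an explicit family of $16$ spanning subgraphs, most naturally in the form of a $4$-dimensional linear code.

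For the upper bound, fix any vertex $v\in V(G_{m,n})$. Since $m,n\geq 3$, the grid $G_{m,n}$ is $4$-regular, so $v$ has exactly four incident edges in $G_{m,n}$. For each spanning subgraph $G$ of $G_{m,n}$, let $\pi_v(G)\in\{0,1\}^{4}$ record which of these four edges lie in $G$. If distinct $G_1,G_2\in\mathcal{G}$ had $\pi_v(G_1)=\pi_v(G_2)$, then $v$ would be isolated in $G_1\oplus G_2$, contradicting the connectedness of $G_1\oplus G_2$. Hence $\pi_v$ is injective on $\mathcal{G}$, and $|\mathcal{G}|\leq 2^{4}=16$.

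For the lower bound at $m=n=3$, I would look for a $4$-dimensional linear subspace $V\subseteq \mathbb{F}_{2}^{E(G_{3,3})}$ whose $15$ nonzero vectors are all connected spanning subgraphs of $G_{3,3}$; then $\mathcal{G}:=V$ has size $16$ and is valid since $G\oplus G'\in V\setminus\{0\}$ for distinct $G,G'\in V$. Equivalently, one may assign a label $a^e\in\mathbb{F}_{2}^{4}$ to each of the $18$ edges so that (i) at every vertex the four incident labels form a basis of $\mathbb{F}_{2}^{4}$, and (ii) for every nonzero $x\in\mathbb{F}_{2}^{4}$ the subgraph $\{e:\langle a^e,x\rangle=1\}$ is connected. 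Condition (i) already forces every nonzero codeword to have no isolated vertex, since the local view at any $v$ depends on $x$ via a linear isomorphism and is hence nonzero whenever $x\neq 0$.

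The main obstacle is property (ii): the basis condition alone does not preclude disconnected codewords. For instance, a symmetric labeling that places all row-edge labels in a $2$-dimensional subspace $W_R$ and all column-edge labels in a complementary $W_C$ yields, for any nonzero $x\in W_R^{\perp}$, a codeword supported purely on column edges, which decomposes into three vertex-disjoint column triangles. To overcome this, the labeling must mix the row and column labels so that every nonzero $x$ activates at least one edge bridging between the three row cycles and between the three column cycles. Once such a labeling is exhibited, checking that all $15$ nonzero codewords are connected is a bounded finite verification that can be carried out by hand or by a short computer search.
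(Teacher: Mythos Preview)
Your upper-bound argument is correct and matches the paper's exactly: both fix a single vertex of the $4$-regular grid, record the $4$-bit pattern of incident edges, and observe that two members of $\mathcal{G}$ with the same pattern would have that vertex isolated in their symmetric difference.

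For the lower bound at $m=n=3$, however, what you have written is a plan rather than a proof. You correctly note that a $4$-dimensional linear subspace of $\mathbb{F}_2^{E(G_{3,3})}$ with all $15$ nonzero codewords connected would do the job, and you isolate the natural local condition (the four edge-labels at each vertex should form a basis of $\mathbb{F}_2^{4}$). But you then stop at ``once such a labeling is exhibited'' without actually exhibiting one, and you yourself point out that the basis condition alone does not force connectedness. Since producing a family with the required property \emph{is} the entire content of the lower bound, the argument is incomplete as written; you have not even argued that a linear code satisfying (ii) exists, only that the obvious symmetric candidates fail. The paper takes the blunt route: it simply lists sixteen explicit spanning subgraphs of $G_{3,3}$ in an appendix and leaves the $\binom{16}{2}$ pairwise connectedness checks implicit. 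Your linear-code framework would be more elegant and would cut the verification down to $15$ graphs if it succeeds, but you still owe either an explicit labeling or a genuine existence argument before the lower bound can be considered proved.
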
	

The rest of the paper is organized as follows.
In Section 2, we present necessary preliminaries, including some definitions and known results.
In Section 3, we give the full proofs of our results.
Finally, in Section 4, we discuss some concluding remarks.

\section{Preliminaries}
\noindent We first state the following important definition from \cite{AKGMS}.

\begin{defi}\label{Fbad.D}
Let $\mathcal{F}$ be a family of graphs. Let $D_{\mathcal{F}}$ denote the maximum possible size of a family $\mathcal{G}$ of graphs on $[n]$ such that the symmetric difference of no two members of $\mathcal{G}$ belongs to $\mathcal{F}$.
\end{defi}

As pointed out in \cite{AKGMS}, determining $D_{\mathcal{F}}(n)$ can be referred to as the dual problem of determining $M_{\mathcal{F}}(n)$ (see \cite{Alon} for more results about $D_{\mathcal{F}}(n)$).
This is because that if we denote by $\overline{\mathcal{F}}$ the class containing exactly those graphs that do not belong to $\mathcal{F}$,
then we have $$D_{\mathcal{F}}(n)=M_{\overline{\mathcal{F}}}(n).$$
Moreover, the authors of \cite{AKGMS} established the following useful relation between $M_{\mathcal{F}}(n)$ and $D_{\mathcal{F}}(n)$.
	
\begin{lem}[Alon et. al, \cite{AKGMS}]\label{Dual.L}
	For any graph family $\mathcal{F}$ and any positive integer $n$, we have
	$$M_{\mathcal{F}}(n)D_{\mathcal{F}}(n)\leq 2^{\binom{n}{2}}.$$
\end{lem}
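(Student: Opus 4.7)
The plan is a clean algebraic injection argument. Identify each graph on $[n]$ with its edge-indicator vector in $\mathbb{F}_2^{\binom{n}{2}}$, so that the symmetric difference of graphs corresponds to coordinatewise addition modulo $2$, and the whole ambient space of graphs on $[n]$ has cardinality $2^{\binom{n}{2}}$.

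Fix an $\mathcal{F}$-good family $\mathcal{G}$ of size $M_{\mathcal{F}}(n)$ and a family $\mathcal{H}$ of size $D_{\mathcal{F}}(n)$ in which the symmetric difference of any two distinct members avoids $\mathcal{F}$. Consider the map
$$\Phi \colon \mathcal{G} \times \mathcal{H} \to \{0,1\}^{\binom{n}{2}}, \qquad \Phi(G, H) = G \oplus H.$$
It suffices to prove that $\Phi$ is injective, since then
$$M_{\mathcal{F}}(n) \cdot D_{\mathcal{F}}(n) = |\mathcal{G}| \cdot |\mathcal{H}| = |\mathcal{G} \times \mathcal{H}| \leq 2^{\binom{n}{2}},$$
which is exactly the claimed inequality.

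For injectivity, suppose $\Phi(G_1, H_1) = \Phi(G_2, H_2)$; rearranging in $\mathbb{F}_2^{\binom{n}{2}}$ gives $G_1 \oplus G_2 = H_1 \oplus H_2$. If both $G_1 \neq G_2$ and $H_1 \neq H_2$, then this common graph must simultaneously lie in $\mathcal{F}$ (by $\mathcal{F}$-goodness of $\mathcal{G}$) and outside $\mathcal{F}$ (by the defining property of $\mathcal{H}$), a contradiction. Hence $G_1 = G_2$ or $H_1 = H_2$; either equality immediately forces the other, so $(G_1, H_1) = (G_2, H_2)$ in all cases.

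I do not expect any serious technical obstacle here: the whole argument is a two-line sumset injection once the $\mathbb{F}_2$-viewpoint is adopted. The only conceptual point to appreciate is that the $\mathcal{F}$-good and $\mathcal{F}$-avoiding conditions are exactly complementary on pairwise symmetric differences, and this perfect dovetailing is what forces $\Phi$ to be injective and thus gives the product bound.
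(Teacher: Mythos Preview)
Your argument is correct and is precisely the standard sumset-injection proof of this lemma. Note that the present paper does not actually prove Lemma~\ref{Dual.L}; it is quoted from \cite{AKGMS}, where the proof given is exactly the one you wrote: view graphs as vectors in $\mathbb{F}_2^{\binom{n}{2}}$, observe that $G_1\oplus G_2=H_1\oplus H_2$ forces a graph that is simultaneously in $\mathcal{F}$ and not in $\mathcal{F}$ unless the pairs coincide, and conclude that the map $(G,H)\mapsto G\oplus H$ is injective.
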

	
Fix a graph $L$. A graph $G$ is called {\it $L$-free} if $G$ does not contain $L$ as a subgraph. Let the Tur\'an number of $L$, denoted by $ex(n,L)$, be the maximum number of edges in an $n$-vertex $L$-free graph.
We will need the following classical results on Tur\'an numbers.
	
\begin{theo}[Erd\"{o}s-Stone\cite{ES}]\label{ES.Th}
	For any graph $L$ with at least one edge, we have
	$$ex(n,L)=(1-\frac{1}{\chi(L)-1}+o(1))\binom{n}{2}.$$
\end{theo}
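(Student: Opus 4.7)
The plan is to establish the Erd\H{o}s--Stone theorem by combining a standard construction for the lower bound with an inductive Tur\'an-type argument for the upper bound. For the lower bound, I would exhibit the Tur\'an graph $T_{r-1}(n)$, the complete balanced $(r-1)$-partite graph on $n$ vertices, where $r = \chi(L)$. Since $T_{r-1}(n)$ has chromatic number $r-1 < r = \chi(L)$, it is $L$-free, and a direct count shows $e(T_{r-1}(n)) = \bigl(1 - \tfrac{1}{r-1}\bigr)\binom{n}{2} + O(n)$, which matches the claimed asymptotic from below.

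For the upper bound, the key reduction is that every graph $L$ on $t = v(L)$ vertices with chromatic number $r$ embeds into the complete $r$-partite graph $K_r(t) := K_{t,t,\ldots,t}$ (with $r$ parts of size $t$), simply by placing each color class inside one part. Hence it suffices to prove the following stronger statement: for every $r \geq 2$, $t \geq 1$, and $\varepsilon > 0$, any graph $G$ on $n$ vertices with $e(G) \geq \bigl(1 - \tfrac{1}{r-1} + \varepsilon\bigr)\binom{n}{2}$ contains $K_r(t)$ as a subgraph, provided $n$ is sufficiently large.

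I would prove this by induction on $r$. For the base case $r = 2$, we must locate a $K_{t,t}$ in a graph with $\Omega(n^2)$ edges; this is a direct consequence of the K\H{o}v\'ari--S\'os--Tur\'an theorem, giving $\mathrm{ex}(n, K_{t,t}) = O(n^{2-1/t}) = o(n^2)$. For the inductive step, assume the statement holds for $r-1$, and set $s := s(r-1, t, \varepsilon)$ large enough so that by the inductive hypothesis, $G$ contains \emph{many} copies of $K_{r-1}(s)$ (a supersaturation step; this follows from a standard averaging argument after deleting copies one at a time). Now fix one such copy with parts $V_1, \ldots, V_{r-1}$, and for each transversal $(v_1, \ldots, v_{r-1}) \in V_1 \times \cdots \times V_{r-1}$ count the size of its common neighborhood $N(v_1) \cap \cdots \cap N(v_{r-1})$ in $G$. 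A convexity/averaging argument using $e(G) \geq (1 - \tfrac{1}{r-1} + \varepsilon)\binom{n}{2}$ and $s$ large yields many transversals with common neighborhood of size at least $t$; then a pigeonhole over $\binom{s}{t}^{r-1}$ choices of transversals locates a common set $V_r$ of $t$ vertices that together with suitable $t$-subsets of each $V_i$ forms the desired $K_r(t)$.

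The main obstacle is making the supersaturation and averaging quantitative so that the $\varepsilon$ in the hypothesis is not consumed by error terms when passing from $r-1$ to $r$. Concretely, one must choose the parameters $s \gg t$ and the number of guaranteed copies of $K_{r-1}(s)$ carefully, so that even after discarding transversals with small common neighborhoods the remaining count beats the pigeonhole threshold $\binom{s}{t}^{r-1}$. The rest of the argument is a bookkeeping of constants, and the $o(1)$ in the theorem statement absorbs all the slack.
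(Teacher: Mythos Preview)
The paper does not prove this statement at all: Theorem~\ref{ES.Th} is quoted in the Preliminaries section as a classical result of Erd\H{o}s and Stone, with a citation to the original paper, and is then used as a black box in the proofs of Theorems~\ref{kcopy.Th} and~\ref{kvexd.Th}. So there is no ``paper's own proof'' to compare against.

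That said, your sketch is the standard modern route to Erd\H{o}s--Stone and is essentially correct. The lower bound via the Tur\'an graph $T_{r-1}(n)$ is immediate, and the reduction of the upper bound to finding $K_r(t)$ in any graph of density exceeding $1-\tfrac{1}{r-1}+\varepsilon$ is the right move. The induction on $r$ with base case handled by K\H{o}v\'ari--S\'os--Tur\'an, followed by a supersaturation step to obtain many copies of $K_{r-1}(s)$ and then a double-counting/pigeonhole over transversals to extend to $K_r(t)$, is exactly the textbook argument (as in Bollob\'as or Diestel). The only place to be careful, as you note, is the quantitative choice of $s$ so that the $\varepsilon$ survives the averaging; this is routine bookkeeping and your outline handles it adequately.
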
	

\begin{theo}[K\"{o}vari-S\'{o}s-Tur\'{a}n\cite{KST}]\label{KST.Th}
	For any integer $t\geq s\geq 2$, we have
	$$ex(n,K_{s,t})\leq\frac{1}{2}(t-1)^{1/s}n^{2-1/s}+\frac{1}{2}(s-1)n.$$
\end{theo}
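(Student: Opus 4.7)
The plan is to establish the Kővári–Sós–Turán bound via the classical double-counting argument on ``cherries'', i.e.\ pairs consisting of a vertex together with an $s$-subset of its neighborhood. Let $G$ be a $K_{s,t}$-free graph on vertex set $[n]$ with $e(G)$ edges, and let $N$ denote the number of pairs $(v,S)$ such that $S \subseteq [n]$ is an $s$-element subset and $v\in [n]$ is adjacent to every vertex of $S$. I will bound $N$ from above and below and compare.

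For the upper bound, fix any $s$-subset $S$; the number of vertices adjacent to all vertices of $S$ is at most $t-1$, because otherwise those common neighbors together with $S$ would form a copy of $K_{s,t}$ in $G$ (here I use $t\geq s$ only to identify the bipartition). Summing over $S$ yields $N\le (t-1)\binom{n}{s}$. For the lower bound, for each vertex $v$ of degree $d(v)$, the $s$-subsets of its neighborhood contribute $\binom{d(v)}{s}$ to $N$, so $N=\sum_{v}\binom{d(v)}{s}$. Assuming for now that the average degree $\bar d := 2e(G)/n$ is at least $s-1$, the convexity of $x\mapsto \binom{x}{s}$ on $[s-1,\infty)$ together with Jensen's inequality gives
$$\sum_{v}\binom{d(v)}{s}\ \ge\ n\binom{\bar d}{s}.$$

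Combining the two bounds produces $n\binom{\bar d}{s}\le (t-1)\binom{n}{s}$. Using the elementary estimates $\binom{\bar d}{s}\ge (\bar d-s+1)^s/s!$ and $\binom{n}{s}\le n^s/s!$, this rearranges to
$$(\bar d-s+1)^s\ \le\ (t-1)\, n^{s-1},$$
so $\bar d\le (t-1)^{1/s} n^{1-1/s}+(s-1)$. Multiplying by $n/2$ and recalling $\bar d=2e(G)/n$ gives exactly the claimed bound on $e(G)=\mathrm{ex}(n,K_{s,t})$. In the remaining case $\bar d<s-1$, we have $2e(G)<(s-1)n$, so the bound holds trivially from its second term alone.

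This argument is standard and has no deep obstacle; the only careful points are (i) justifying Jensen's inequality on the right domain by splitting off the low-density case, and (ii) using the one-sided estimate $\binom{x}{s}\ge (x-s+1)^s/s!$ (rather than the looser $\binom{x}{s}\ge x^s/s!\cdot(\text{correction})$) in order to absorb the $s-1$ shift cleanly into the additive $\tfrac{1}{2}(s-1)n$ term.
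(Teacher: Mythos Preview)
Your argument is the classical K\H{o}v\'ari--S\'os--Tur\'an double-counting proof and is essentially correct; note that the paper does not actually prove this theorem but merely quotes it as a preliminary from \cite{KST}, so there is no ``paper's own proof'' to compare against. One small technical point worth tightening: Jensen's inequality requires convexity on the convex hull of the data points $d(v)$, not merely at the mean $\bar d$, so assuming $\bar d\ge s-1$ alone is not quite enough. The standard one-line fix is to replace each $d(v)<s-1$ by $s-1$ (this leaves every term $\binom{d(v)}{s}=0$ unchanged), apply Jensen on $[s-1,\infty)$ to the modified degrees, and then use that $\binom{x}{s}$ is nondecreasing on $[s-1,\infty)$ together with $\bar d'\ge \bar d\ge s-1$ to pass back to $\binom{\bar d}{s}$. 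With that patch your derivation of $(\bar d-s+1)^s\le (t-1)n^{s-1}$ and the final bound is clean.
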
	

Let $F_{n}(L)$ denote the number of $L$-free graphs on $[n]$.
This is asymptotically determined by the chromatic number $\chi(L)$ of $L$ in the following theorem.

\begin{theo}[Erd\"{o}s-Frankl-R\"{o}dl\cite{EFR}]\label{EFR.Th}
	Suppose that $\chi(L)\geq 3$. Then
	$$F_{n}(L)=2^{(1-\frac{1}{\chi(L)-1}+o(1))\binom{n}{2}}.$$
\end{theo}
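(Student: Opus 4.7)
The plan is to establish matching lower and upper bounds on $F_n(L)$ whose exponents agree up to $o(\binom{n}{2})$.

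For the lower bound I would use the Tur\'an graph $T_{\chi(L)-1}(n)$. Since $T_{\chi(L)-1}(n)$ is $(\chi(L)-1)$-chromatic, it cannot contain a copy of $L$, and therefore every one of its $2^{e(T_{\chi(L)-1}(n))}$ spanning subgraphs is $L$-free. Combining this with Theorem~\ref{ES.Th} (or directly with Tur\'an's theorem) gives
$$F_n(L)\ \geq\ 2^{e(T_{\chi(L)-1}(n))}\ =\ 2^{(1-\frac{1}{\chi(L)-1}+o(1))\binom{n}{2}},$$
which already matches the target. Note that this step uses only the hypothesis $\chi(L)\geq 2$; the assumption $\chi(L)\geq 3$ is needed only for the upper bound, to make the exponent nontrivial.

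For the upper bound I would invoke the Szemer\'edi Regularity Lemma together with a stability argument. Fix a small constant $\varepsilon>0$ and, for any $L$-free graph $G$ on $[n]$, apply the regularity lemma to obtain an equitable partition $[n]=V_1\cup\cdots\cup V_M$ where $M$ is bounded in terms of $\varepsilon$ and at least $(1-\varepsilon)\binom{M}{2}$ pairs are $\varepsilon$-regular. Build the reduced graph $R$ by connecting $V_i$ to $V_j$ when the pair is $\varepsilon$-regular with density above some fixed threshold $\delta>0$. The standard embedding (counting) lemma shows that if $R$ contains $K_{\chi(L)}$, then $G$ contains $L$; hence $R$ is $K_{\chi(L)}$-free, so by Tur\'an's theorem $e(R)\leq (1-\frac{1}{\chi(L)-1})\binom{M}{2}$. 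Pulling this back to $G$ shows that after removing at most $o(n^2)$ edges, the edge set of $G$ lies inside a $(\chi(L)-1)$-partite subgraph of $K_n$.

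With this structural statement in hand, I would finish by counting. The number of ordered partitions of $[n]$ into $\chi(L)-1$ classes is at most $(\chi(L)-1)^n = 2^{o(n^2)}$; for each such partition the $(\chi(L)-1)$-partite graph on it has at most $(1-\frac{1}{\chi(L)-1})\binom{n}{2}$ edges, contributing a factor $2^{(1-\frac{1}{\chi(L)-1})\binom{n}{2}}$; and the choice of the $o(n^2)$ ``exceptional'' edges contributes a further factor of $\binom{\binom{n}{2}}{o(n^2)}=2^{o(n^2)}$. Multiplying these three factors yields the required upper bound.

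The main obstacle is making the stability-plus-counting step quantitatively tight enough to absorb all error terms into the single $o(1)$ in the exponent, and in particular handling graphs $G$ that are close to but not exactly $(\chi(L)-1)$-partite. A conceptually cleaner alternative would be to apply the hypergraph container method (Saxton--Thomason, Balogh--Morris--Samotij) to the ``bad'' hypergraph on $E(K_n)$ whose edges are copies of $L$: the container theorem supplies a family of $2^{o(n^2)}$ containers, each with at most $\mathrm{ex}(n,L)+o(n^2)$ edges, and every $L$-free graph is a subgraph of one container, immediately giving the desired bound via Theorem~\ref{ES.Th}.
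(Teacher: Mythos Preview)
The paper does not prove this theorem; it is quoted from \cite{EFR} as a black box, so there is no ``paper's own proof'' to compare against. Your lower bound is correct, and your overall plan for the upper bound---regularity plus counting, or alternatively containers---is the standard one.

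There is, however, a genuine gap in the regularity half of your sketch. From ``the reduced graph $R$ is $K_{\chi(L)}$-free, so $e(R)\le (1-\tfrac{1}{\chi(L)-1})\binom{M}{2}$'' you conclude that every $L$-free $G$ is within $o(n^2)$ edges of a $(\chi(L)-1)$-partite graph. This is false: being $K_{\chi(L)}$-free does not make $R$ (or $G$) close to $(\chi(L)-1)$-colourable. For a concrete counterexample take $L=K_3$ and let $G$ be the balanced blow-up $C_5[n/5]$. Then $G$ is triangle-free, but its maximum bipartite subgraph misses $n^2/25=\Theta(n^2)$ edges, so $G$ is \emph{not} $o(n^2)$-close to bipartite. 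Your subsequent count (partitions $\times$ subgraphs of a complete $(\chi(L)-1)$-partite graph $\times$ $o(n^2)$ exceptional edges) therefore fails to cover such graphs. The Erd\H{o}s--Simonovits stability theorem you are implicitly invoking only applies to $L$-free graphs whose edge count is already within $o(n^2)$ of $\mathrm{ex}(n,L)$.

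The fix is to abandon the $(\chi(L)-1)$-partite structural claim and count directly from the regularity partition: record the partition (at most $M^n$ choices), the set of irregular pairs, and the reduced graph; then bound separately the edges inside parts and irregular pairs (at most $(\tfrac{1}{M}+\varepsilon)n^2$ positions), the edges in sparse regular pairs (at most $H(\delta)\cdot\tfrac{n^2}{2}$ bits, since each such pair carries at most a $\delta$-fraction of its possible edges), and the edges in dense regular pairs (at most $e(R)(n/M)^2\le (1-\tfrac{1}{\chi(L)-1})\tfrac{n^2}{2}$ positions, by Tur\'an applied to $R$). Sending $\varepsilon,\delta\to 0$ and $M\to\infty$ gives the bound. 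Your container alternative is also correct and avoids this pitfall entirely.
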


Lastly, we need two powerful tools from Extremal Graph Theory, known as Graph Removal Lemma and Dependent Random Choice.

\begin{lem}[Graph Removal Lemma\cite{EFR}]\label{GR.L}
	Given any fixed graph $L$, for any $\varepsilon>0$, there exists $\delta>0$ such that for any $n$-vertex graph $G$ which contains at most $\delta n^{v(L)}$ copies of $L$, we can remove at most $\varepsilon n^{2}$ edges of $G$ to get an $L$-free graph.
\end{lem}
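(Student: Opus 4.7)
The plan is to apply Szemer\'edi's Regularity Lemma and pair it with the standard embedding-counting argument: partition $V(G)$ regularly, delete a negligible set of edges to retain only ``well-structured'' pairs, and show that any surviving copy of $L$ blows up into $\Omega(n^{v(L)})$ copies of $L$ inside $G$, which forces the cleaned subgraph to be $L$-free once $\delta$ is small.

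More concretely, I would first fix auxiliary parameters $d,\varepsilon'>0$ depending on $\varepsilon$ and $v(L)$, with $d\ll\varepsilon$ and $\varepsilon'\ll d$, and then apply the Regularity Lemma to obtain an equitable partition $V_{1}\cup\cdots\cup V_{k}$ of $V(G)$ in which all but at most $\varepsilon'\binom{k}{2}$ of the pairs $(V_{i},V_{j})$ are $\varepsilon'$-regular. I would then form a cleaned subgraph $G'\subseteq G$ by deleting (a) every edge lying inside some part $V_{i}$, (b) every edge lying across an irregular pair, and (c) every edge lying across a regular pair of density below $d$. A routine count bounds the total number of deleted edges by $\bigl(\tfrac{1}{k}+\varepsilon'+\tfrac{d}{2}\bigr)n^{2}$, which is at most $\varepsilon n^{2}$ provided $k$ is sufficiently large (which is guaranteed by choosing $\varepsilon'$ small at the start) and $d$, $\varepsilon'$ are sufficiently small.

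The core of the argument is a contradiction: suppose $G'$ still contains a copy of $L$, embedded into parts $V_{i_{1}},\ldots,V_{i_{v(L)}}$ (which are distinct since intra-part edges have been removed). By construction, each pair $(V_{i_{a}},V_{i_{b}})$ corresponding to an edge of $L$ is $\varepsilon'$-regular with density at least $d$. The standard Counting Lemma, proved by embedding the vertices of $L$ one by one while tracking the common neighborhoods and discarding at each step the few ``deficient'' vertices guaranteed by $\varepsilon'$-regularity, then produces at least $c(d,v(L))\cdot(n/k)^{v(L)}$ copies of $L$ in $G$, one vertex drawn from each $V_{i_{j}}$. Choosing $\delta:=c(d,v(L))/(2k^{v(L)})$ then makes this count exceed $\delta n^{v(L)}$, contradicting the hypothesis. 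Hence $G'$ must be $L$-free, and we have removed at most $\varepsilon n^{2}$ edges of $G$.

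The main obstacle is the careful ordering of the parameter chase. Given $\varepsilon$, one must first fix $d$ small enough that the density-based cleaning in step (c) costs at most $\tfrac{\varepsilon}{3}n^{2}$; then choose $\varepsilon'$ small enough, crucially satisfying something like $\varepsilon'\ll d^{e(L)}/v(L)$, so that the iterated applications of regularity in the Counting Lemma leave a nonempty pool of embeddings; only then does the Regularity Lemma provide an upper bound $k\leq K(\varepsilon')$; and only after $k$ is bounded can $\delta$ be chosen. A subsidiary technical point is establishing the Counting Lemma itself in the form needed here, but this is a now-standard consequence of the definition of regular pairs and does not require any ingredient beyond pigeonhole.
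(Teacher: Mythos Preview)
Your proposal is the standard regularity-plus-counting proof of the Graph Removal Lemma and is essentially correct as outlined; the parameter hierarchy you describe is the right one, and the Counting Lemma step works exactly as you say.

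However, there is nothing to compare against: the paper does not prove Lemma~\ref{GR.L} at all. It is quoted from \cite{EFR} as a black-box tool and then invoked in the proof of Theorem~\ref{kcopy.Th}. So your write-up supplies a proof where the paper simply cites one.
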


\begin{lem}[Dependent Random Choice\cite{FS}]\label{DRC.L}
	Let $\alpha\in(0,1)$, $t,r,m,u,n$ be integers such that $\alpha^{t}n-\binom{n}{r}(\frac{m}{n})^{t}\geq u$. Then for any $n$-vertex graph $G$ with at least $\frac{\alpha}{2}n^{2}$ edges, there exists $U\subseteq V(G)$ with $|U|\geq u$ such that any $r$-set $S\subseteq U$ has at least $m$ common neighbors in $G$.
\end{lem}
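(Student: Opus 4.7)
The first step is the upper bound $M_{\mathcal{F}_c}(G_{m,n}) \leq 16$, which follows from a purely local observation. Fix any vertex $v \in V(G_{m,n})$; since $G_{m,n}$ is $4$-regular, $v$ is incident to exactly $4$ edges of $G_{m,n}$, so any spanning subgraph of $G_{m,n}$ restricts at $v$ to one of $2^4 = 16$ possible subsets. If two members $G, G' \in \mathcal{G}$ had the same restriction at $v$, then $v$ would be isolated in $G \oplus G'$, making $G \oplus G'$ disconnected (as $G_{m,n}$ has $mn \geq 9$ vertices). Hence the restrictions at $v$ must be pairwise distinct across $\mathcal{G}$, yielding $|\mathcal{G}| \leq 16$.

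For the matching lower bound when $m = n = 3$, the goal is to exhibit an explicit family of $16$ spanning subgraphs of $G_{3,3}$ whose pairwise symmetric differences are connected. The plan is to seek a $4$-dimensional $\mathbb{F}_2$-linear subspace $V$ of the $18$-dimensional edge space of $G_{3,3}$ such that every non-zero element of $V$ is a connected spanning subgraph; then $\mathcal{G} = V$ has size $16$, and for any two members the symmetric difference lies in $V \setminus \{0\}$ and is therefore connected and spanning. Observe that a \emph{necessary} condition is that, at every vertex $v$, the projection of $V$ onto the four edges incident to $v$ is surjective (equivalently, four generators of $V$ have linearly independent ``local views'' at $v$); this encodes exactly the ``no isolated vertex'' requirement revealed by the upper bound and strongly restricts the search.

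The main obstacle will be upgrading the ``no isolated vertex'' condition to full connectedness for each of the $15$ non-zero combinations simultaneously. The approach is to start with a pair of well-understood generators and then augment. Concretely, $G_{3,3} = K_3 \square K_3$ admits a decomposition into two edge-disjoint Hamilton cycles $g_1, g_2$, so $g_1$, $g_2$, and $g_1 \oplus g_2 = E(G_{3,3})$ are all already connected spanning. One then chooses two further connected spanning subgraphs $g_3, g_4$, guided by the basis-at-every-vertex requirement, so that the remaining $12$ linear combinations are likewise connected spanning. Thanks to the large automorphism group of $G_{3,3}$ (it is vertex- and edge-transitive, and in fact isomorphic to the Paley graph on $9$ vertices), one can exploit symmetry to reduce this to a short explicit verification: check that each of the $15$ combinations is connected. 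This produces an $\mathcal{F}_c$-good family of size $16$ and completes the proof that $M_{\mathcal{F}_c}(G_{3,3}) = 16$.
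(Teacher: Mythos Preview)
Your proposal does not address the stated lemma at all. The statement you were asked to prove is the Dependent Random Choice lemma (Lemma~\ref{DRC.L}), a result quoted from Fox--Sudakov which the paper itself does not prove. What you have written instead is an argument for Proposition~\ref{grid.Pro} about $M_{\mathcal{F}_c}(G_{m,n})$; these are entirely unrelated statements, so as a proof of Lemma~\ref{DRC.L} the proposal is vacuous.

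If your intent was actually Proposition~\ref{grid.Pro}, then your upper-bound argument is correct and essentially identical to the paper's: both fix a vertex of the $4$-regular grid and observe that the local edge-sets at that vertex must be pairwise distinct across $\mathcal{G}$, giving $|\mathcal{G}|\le 2^4=16$. For the lower bound at $m=n=3$, the paper simply exhibits sixteen explicit subgraphs in an appendix, with no claim of linear structure. Your $\mathbb{F}_2$-linear-subspace idea is a reasonable and potentially more elegant route, but as written it is only a plan, not a proof: you assert that suitable $g_3,g_4$ can be chosen and that the remaining twelve non-zero combinations will be connected, yet you neither exhibit $g_3,g_4$ nor perform the verification. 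Without those concrete choices and checks, the lower bound remains unproved.
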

	
\section{Proofs}

\subsection{Proof of Theorem \ref{Tree.Th}}

\noindent The proof of Theorem \ref{Tree.Th} is closely related to the following famous conjecture of Kotzig \cite{Kot}.
By a {\it perfect 1-factorization}, we mean the partition of the edge set of a graph into perfect matchings such that the union of any two of them forms a Hamiltonian cycle.
	
\begin{conj}[Perfect 1-factorization Conjecture, Kotzig \cite{Kot}]\label{P1FC}
	The complete graph $K_{n}$ has a perfect 1-factorization for any even $n>2$.
\end{conj}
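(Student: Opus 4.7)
The plan is to attack the conjecture through the classical starter-adder framework that underlies all known positive cases. Write $n = 2m$ and identify $V(K_n)$ with $\mathbb{Z}_{2m-1} \cup \{\infty\}$. Recall that a \emph{starter} in $\mathbb{Z}_{2m-1}$ is a partition of $\mathbb{Z}_{2m-1} \setminus \{0\}$ into $m-1$ pairs $\{x_i, y_i\}$ such that the $2(m-1)$ differences $\{\pm(x_i - y_i)\}$ cover each nonzero element of $\mathbb{Z}_{2m-1}$ exactly once. Any starter $S$ produces a 1-factorization $\{F_0, F_1, \dots, F_{2m-2}\}$ by setting $F_0 = \{\{\infty, 0\}\} \cup S$ and taking $F_j$ to be the translate of $F_0$ by $j$ on the finite coordinates. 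So the combinatorial task reduces to constructing a \emph{perfect} starter, meaning that $F_0 \cup F_j$ is a single Hamiltonian cycle for every $j \in \mathbb{Z}_{2m-1} \setminus \{0\}$ (the other pairs $F_i \cup F_{i+j}$ are then Hamiltonian by symmetry).

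First I would reformulate perfectness as a group-theoretic single-cycle condition. Let $\sigma_S$ be the fixed-point-free involution of $\mathbb{Z}_{2m-1} \setminus \{0\}$ that swaps each pair of $S$, extended by sending $\infty \leftrightarrow 0$. Then the edges of $F_0 \cup F_j$ trace out the orbits of the permutation $\sigma_S \circ \tau_j \circ \sigma_S \circ \tau_{-j}$, where $\tau_j$ denotes translation by $j$, and Hamiltonicity of $F_0 \cup F_j$ translates into this permutation acting as a single long cycle on the finite points for every nonzero $j$. Once this reduction is in hand, I would verify the algebraic construction for $n = p + 1$ with $p$ an odd prime: take $S = \{\{g^{2k}, g^{2k+1}\}: 0 \le k < (p-1)/2\}$ for a primitive root $g$ of $\mathbb{F}_p$, and deduce the single-cycle condition from the fact that $g$ generates $\mathbb{F}_p^{\times}$. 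A parallel construction in a sharply transitive action on $V(K_{2p})$ should handle $n = 2p$, matching the known results of Anderson and Kotzig that Theorem~\ref{Tree.Th} relies on.

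The main obstacle, and the reason the conjecture has resisted resolution for more than sixty years, is the case where $n - 1$ is composite and no multiplicative structure on $V(K_n) \setminus \{\infty\}$ is available. Cyclic sharply transitive actions on a composite-order set do not exhibit the regularity that makes the single-cycle condition tractable, and the natural product constructions on starters fail to respect perfectness: two perfect starters for $\mathbb{Z}_a$ and $\mathbb{Z}_b$ do not in any obvious way combine into one for $\mathbb{Z}_{ab}$, because the orbit structure of the product permutation above is not multiplicative in $|\mathbb{Z}_{n-1}|$. I would try to relax cyclicity and work with an arbitrary group of order $n - 1$, or to allow "non-regular" starters under bipartite or doubly-transitive symmetries, but no method is presently known to go beyond prime-related values. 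I therefore expect that a realistic outcome of this line of attack is not a full proof but rather a density statement — for instance, that the conjecture holds for a set of even $n$ of positive lower density — which, combined with Dirichlet-type results on primes in arithmetic progressions, is already sufficient to supply the downstream application to Theorem~\ref{Tree.Th}.
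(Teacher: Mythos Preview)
This is not a theorem the paper proves; it is Kotzig's Perfect 1-factorization Conjecture, which has been open since 1964 and remains open today. The paper simply \emph{states} it as Conjecture~\ref{P1FC} and then, in the proof of Theorem~\ref{Tree.Th}, invokes only the two settled cases $n=p+1$ (Kotzig) and $n=2p$ (Anderson, Nakamura) for odd primes $p$. There is no proof in the paper for you to compare against.

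Your proposal is accordingly not a proof at all but a sketch of a research programme, and you say as much in your final paragraph: you expect at best a density statement, not a resolution of the conjecture. The starter--adder reformulation you describe is standard and correct, and it does recover the $n=p+1$ case essentially as you outline, but the obstacle you identify for composite $n-1$ is exactly the obstacle that has blocked everyone for sixty years. Nothing in your plan gets past it, and you do not claim otherwise. So the ``gap'' is simply that the statement is a genuine open problem: no proof is expected here, and none should be submitted. For the purposes of Theorem~\ref{Tree.Th} you need only cite the known prime and doubled-prime cases, which is precisely what the paper does.
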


	This conjecture is still open in general, but it is known to hold in several special cases. For example, whenever $n=p+1$(Kotzig\cite{Kot}) or $n=2p$ for some odd prime $p$ (Anderson\cite{An} and Nakamura\cite{Naka}, cf. also Kobayashi\cite{Koba}).
	
\begin{proof}[Proof of Theorem \ref{Tree.Th}]
	Let $n\geq 65$ be an odd integer such that Conjecture \ref{P1FC} holds for $n+1$, then we can partition the edge set of $K_{n+1}$ into $n$ perfect matchings $M_{1},M_{2},\dots,M_{n}$ such that the union of any two of them forms a Hamiltonian cycle in $K_{n+1}$.
	For each $1\leq i\leq n$, we delete the edge adjacent to $n+1$ in $M_{i}$, then for any $i\neq j\in[n]$, $M_{i}\cup M_{j}$ forms a Hamiltonian path in $K_{n}$.
	
	Let $\mathcal{G}$ be the graph family consists of the unions of even number of matchings in $\mathcal{M}=\{M_{1},M_{2},\dots,M_{n-1}\}$, then $|\mathcal{G}|=2^{n-2}$. Note that $M_{n}$ is moved out from $\mathcal{M}$ because we will then add some edges in $M_{n}$ to the members of $\mathcal{G}$ in order to guarantee that the symmetric difference of any two members of $\mathcal{G}$ is the disjoint union of at least 2 matching in $\mathcal{M}$ and at least $3\ell-5$ additional edges.
	
	Firstly, we want to partition $\mathcal{G}$ into several parts, with the property that the symmetric difference of any two graphs in the same part is the union of at least 4 matchings in $\mathcal{M}$.
	Let $G$ be the union of $2k$ matchings in $\mathcal{M}$, then the number of different graphs $G'$ in $\mathcal{G}$, such that the symmetric difference of $G$ and $G'$ is exactly the union of two matchings in $\mathcal{M}$, is $\binom{2k}{2}+2k(n-1-2k)+\binom{n-1-2k}{2}=\binom{n-1}{2}$. Let $F$ denote the graph whose vertices are the members of $\mathcal{G}$ and two members are connected in $F$ if and only if their symmetric difference is exactly the union of 2 matchings in $\mathcal{M}$. Then $F$ is an $\binom{n-1}{2}$-regular graph and our desired partition of $\mathcal{G}$ equals to a proper vertex coloring of $F$. Since $\chi(F)\leq \Delta(F)+1=\binom{n-1}{2}+1$, we can partition $\mathcal{G}$ into $s=\binom{n-1}{2}+1$ parts $\mathcal{G}_{1},\dots,\mathcal{G}_{s}$ such that the symmetric difference of any two graphs in the same part is the union of at least 4 matchings in $\mathcal{M}$.
	
	Secondly, we want to find a family $\mathcal{H}$ of subgraphs of $M_{n}$ such that the symmetric difference of any two members in $\mathcal{H}$ contains at least $3\ell-5$ different edges. Since $M_{n}$ consists of $\frac{n-1}{2}$ disjoint edges, we are going to find a Hamming Code $\mathcal{H}$ with length $\frac{n-1}{2}$ and minimum distance $3\ell-5$.
	By the famous Gilbert–Varshamov bound (due to Edgar Gilbert\cite{Gil} and independently Rom Varshamov\cite{Var}), there exists an $\mathcal{H}$ with cardinality at least
	$$\frac{2^{\frac{n-1}{2}}}{\sum_{i=0}^{3\ell-6}\binom{\frac{n-1}{2}}{i}}.$$
	Therefore, we have
	$$|\mathcal{H}|\geq \frac{2^{\frac{n-1}{2}}}{(3\ell-6)(\frac{n-1}{2})^{3\ell-6}}.$$
	Let $m=\frac{n-1}{2}$ and $t=3\ell-6$. To get $|\mathcal{H}|\geq \binom{n-1}{2}+1$, we only need to prove that
	$$\frac{2^{m}}{tm^{t}}\geq 2m^{2}.$$ That is
	$$m-t\log m-\log t-2\log m-1\geq 0.$$
	Since $\ell \leq\frac{n-1}{12(\log n)}+2$, we have $t\leq\frac{n-1}{4(\log n)}\leq \frac{m}{2\log m}$.
	Therefore,
	$$m-t\log m-\log t-2\log m-1\geq \frac{m}{2}-3\log m+\log\log m\geq 0$$ due to $m=\frac{n-1}{2}\geq 32$.
	
	So we can take $s=\binom{n-1}{2}+1$ different subgraphs $H_{1},H_{2},\dots,H_{s}$ of $M_{n}$ such that the symmetric difference of any two of them contains at least $3\ell-5$ disjoint edges. Let $\mathcal{G}_{i}'=\{G\cup H_{i}|G\in\mathcal{G}_{i}\}$ for all $1\leq i\leq \binom{n-1}{2}+1$ and $\mathcal{G}'=\bigcup_{i=1}^{\binom{n-1}{2}+1}\mathcal{G}_{i}'$. Then the symmetric difference of any two members of $\mathcal{G}'$ contains at least 2 matchaings in $\mathcal{M}$ and $3\ell-5$ disjoint additional edges.
	
	Finally, we only need to find a spanning tree with exactly $\ell$ leaves in the union graph $G$, which consists of 2 matchings in $\mathcal{M}$ and $3\ell-5$ disjoint additional edges. Let $T=v_{1}v_{2}\dots v_{n}$ be the Hamiltonian path consists of 2 matchings in $\mathcal{M}$ and let $E_{A}$ be the set of $3\ell-5$ additional edges. We first remove the edge adjacent to $v_{n}$ from $E_{A}$ if there exists such an edge in $E_{A}$ and then do the following operation: 
\begin{itemize}
	\item Take an edge $\{v_{i},v_{j}\}$ in $E_{A}$, where $i<j$ and $i$ is as small as possible, add this edge to $T$ and remove it from $E_{A}$. Delete the edge $\{v_{i},v_{i+1}\}$ from the $T$ and remove any edges that are adjacent to $v_{i+1}$ or $v_{j-1}$ from $E_{A}$.
\end{itemize}	
	 Note that after this operation, the number of leaves in the spanning tree $T$ increases exactly one, and we remove at most 3 edges from $E_{A}$. So we can repeat this operation $\ell-2$ times and then $T$ becomes a spanning tree with exactly $\ell$ leaves. 	
\end{proof}

\subsection{Proof of Theorem \ref{kcopy.Th}}
	
\begin{proof}[Proof of Theorem \ref{kcopy.Th}]
	First we prove the case $k=o(n^{v(L)})$.
	Let $F_{n}(L,k)$ denote the number of graphs containing at most $k-1$ copies of $L$ on $[n]$. Let $G$ be such a graph. By Lemma \ref{GR.L}, we can delete at most $o(n^{2})$ edges of $G$ to get an $L$-free graph. So we have
	$$F_{n}(L,k)\leq F_{n}(L)\binom{\binom{n}{2}}{o(n^{2})}=2^{o(1)\binom{n}{2}}F_{n}(L).$$
	On the other hand, we also have $F_{n}(L,k)\geq F_{n}(L)$.
	Then by Theorem \ref{EFR.Th} and Theorem \ref{ES.Th} we have
	$$F_{n}(L,k)=2^{(1-\frac{1}{\chi(L)-1}+o(1))\binom{n}{2}}.$$
	
	Let $G_{L,k}$ denote the graph whose vertices are all possible graphs on $[n]$ and two are connected if and only if their symmetric difference contains at most $k-1$ copies of $L$. Then $M_{L}(n,k)$ equals to the independence number $\alpha(G_{L,k})$ of $G_{L,k}$. Moreover, $G_{L.k}$ is an $F_{L}(n,k)$-regular graph. So we have
	$$M_{L}(n,k)=\alpha(G_{L,k})\geq\frac{v(G_{L,k})}{\Delta(G_{L,k})+1}=\frac{2^{\binom{n}{2}}}{F_{n}(L,k)+1}=2^{(\frac{1}{\chi(L)-1}+o(1))\binom{n}{2}}.$$
	Since $M_{L}(n,k)\leq M_{L}(n)=2^{(\frac{1}{\chi(L)-1}+o(1))\binom{n}{2}}$, we have
	$$\lim_{n\to\infty}\frac{\log M_{L}(n,k)}{\binom{n}{2}}=\frac{1}{\chi(L)-1}$$ for $k=o(n^{v(L)})$.
	
	Then we prove the case when $k=cn^{v(L)}$ for some constant $c>0$.
	Let $G$ be an arbitrary graph. When we add an edge to $G$, the number of copies of $L$ in $G$ increases at most $e(L)n^{v(L)-2}$. So we can construct a graph $H$ which contains at most $k-1$ copies of $L$ by adding $\frac{2c}{e(L)}\binom{n}{2}$ edges to an extremal $L$-free graph (i.e., an $L$-free graph with maximum number of edges). Then $e(H)=ex(n,L)+\frac{2c}{e(L)}\binom{n}{2}$ and we can obtain a lower bound of the corresponding dual concept $D_{L}(n,k)$ by constructing a family consisting of all subgraphs of $H$. Therefore, by Lemma \ref{Dual.L}, we have
	$$M_{L}(n,k)\leq2^{\binom{n}{2}-e(H)}.$$
	Then by Theorem \ref{ES.Th}, we have that for $k=cn^{v(L)}$,
	$$\lim_{n\to\infty}\frac{\log M_{L}(n,k)}{\binom{n}{2}}\leq \frac{1}{\chi(L)-1}-\frac{2c}{e(L)}$$
as desired.
\end{proof}

\subsection{Proof of Theorem \ref{kvexd.Th}}
	
\begin{proof}[Proof of Theorem \ref{kvexd.Th}]
	We first prove the case $k=o(n)$.
	Let $F_{n}(k\cdot L)$ denote the number of graphs containing $k$ vertex-disjoint copies of $L$ on $[n]$.
	Let $G$ be an arbitrary graph, a copy of $L$ in $G$ intersects at most $v(L)n^{v(L)-1}$ other copies of $L$. So we have $F_{n}(k\cdot L)\leq F_{L}(n,k(v(L)n^{v(L)-1}+1))$. Since $k=o(n)$, $k(v(L)n^{v(L)-1}+1)=o(n^{v(L)})$.
	Using $F_{n}(L)\leq F_{n}(k\cdot L)\leq F_{L}(n,k(v(L)n^{v(L)-1}+1))$, we have
	$$F_{n}(k\cdot L)=2^{(1-\frac{1}{\chi(L)-1}+o(1))\binom{n}{2}}.$$
	Then by the same argument in the proof of Theorem \ref{kcopy.Th}, we have that for $k=o(n)$
	$$\lim_{n\to\infty}\frac{\log M_{k\cdot L}(n)}{\binom{n}{2}}=\frac{1}{\chi(L)-1}.$$

Next we consider the case when $k=cn$ for some constant $c>0$.
Let $G$ be a graph on $n$ labeled vertices which satisfies the following two properties:
\begin{itemize}	
\item [(i)] If $S$ denotes the first $k-1$ vertices of $G$, then $G[S]$ is a clique and $G[V(G)\setminus S]$ is an extremal $L$-free graph, and
\item [(ii)] $G$ contains all possible edges between $S$ and $V(G)\setminus S$.
\end{itemize}	
Because any copy of $L$ in $G$ must contains at least one vertex in $S$, we see that $G$ contains at most $k-1$ vertex-disjoint copies of $L$.
	Moreover, we have
	$$e(G)=\binom{cn-1}{2}+(cn-1)(n-cn+1)+ex(n-cn+1,L)=(1-\frac{(1-c)^{2}}{\chi(L)-1}+o(1))\binom{n}{2}.$$
	Since the family consisting of all subgraphs of $G$ provides a lower bound to the corresponding dual concept $D_{k\cdot L}(n)$, by Lemma \ref{Dual.L}, we have
	$$M_{k\cdot L}(n)\leq 2^{\binom{n}{2}-e(G)}=2^{(\frac{(1-c)^{2}}{\chi(L)-1}+o(1))\binom{n}{2}}.$$
	Therefore, it follows that for $k=cn$,
	$$\lim_{n\to\infty}\frac{\log M_{k\cdot L}(n)}{\binom{n}{2}}\leq \frac{(1-c)^{2}}{\chi(L)-1},$$
completing the proof.
\end{proof}

\subsection{Proof of Theorem \ref{ComBi.Th}}
	
\begin{proof}[Proof of Theorem \ref{ComBi.Th}]
	First we prove the case $t=o(\log n)$.
	By Theorem \ref{KST.Th}, we have
	$$ex(n,K_{t,t})\leq\frac{1}{2}(t-1)^{\frac{1}{t}}n^{2-\frac{1}{t}}+\frac{1}{2}(t-1)n.$$
	Let $t=\varepsilon\log n$, then we have
	$$ex(n,K_{t,t})\leq2^{-1-\frac{1}{\varepsilon}}(\varepsilon\log n-1)^{\frac{1}{\varepsilon}\log_{n}2}n^{2}+o(n^{2}).$$
	Let $\varepsilon\to 0$, we have $ex(n,K_{t,t})=o(n^{2})$.
	Therefore, the number of $K_{t,t}$-free graph on $n$ labeled vertices is at most $\binom{\binom{n}{2}}{o(n^{2})}=2^{o(1)\binom{n}{2}}$.
	Moreover, $F_{n}(K_{t,t})\geq 2^{ex(n,K_{t,t})}$. So we have $F_{n}(K_{t,t})=2^{o(1)\binom{n}{2}}$.
	
	Let $G_{K_{t,t}}$ denote the graph whose vertices are all possible graphs on $n$ labeled vertices and two vertices are connected if and only if their symmetric difference contains a copy of $K_{t,t}$. Then $M_{K_{t,t}}(n)=\alpha(G_{K_{t,t}})$ and $G_{K_{t,t}}$ is $F_{n}(K_{t,t})$-regular. So we have
	 $$M_{K_{t,t}}(n)=\alpha(G_{K_{t,t}})\geq\frac{v(G_{K_{t,t}})}{\Delta(G_{K_{t,t}})+1}=\frac{2^{\binom{n}{2}}}{F_{n}(K_{t,t})+1}=2^{(1+o(1))\binom{n}{2}}.$$
	 Since $M_{K_{t,t}}(n)\leq 2^{\binom{n}{2}}$, for $t=o(\log n)$ we have
	 $$\lim_{n\to\infty}\frac{\log M_{K_{t,t}}(n)}{\binom{n}{2}}=1.$$

It remains to consider the case when $t=c\log n$ for some constant $c>0$.
In this case, we only need to construct an $n$-vertex $K_{t,t}$-free graph $G$ with at least $2^{-\frac{2}{c}}\binom{n}{2}$ edges.
	 If such graph $G$ exists, then the family consisting of all subgraphs of $G$ provides a lower bound to the corresponding dual concept $D_{K_{t,t}}(n)$. So by Lemma \ref{Dual.L}, we have $M_{K_{t,t}}(n)\leq2^{\binom{n}{2}-e(G)}$. Therefore, we have
	 $$\lim_{n\to\infty}\frac{\log M_{K_{t,t}}(n)}{\binom{n}{2}}\leq1-2^{-\frac{2}{c}}$$
	 for $k=c\log n$.
	
	 Now we use probabilistic methods to construct such a graph $G$.
	 Let $\delta=2^{-\frac{2}{c}}$ and consider the Erd\"{o}s-R\'{e}nyi random graph $G(n,\delta)$ (i.e. an n-vertex graph in which each possible edge is present independently with probability $\delta$). Let $X$ be the number of $K_{t,t}$ in $G(n,\delta)$, we have
	 $$\mathbb{E}[X]=\frac{1}{2}\binom{n}{2t}\binom{2t}{t}\delta^{t^{2}}<n^{2t}\delta^{t^{2}}=(n^{2}\delta^{t})^{t}.$$
	 Since $\delta^{t}=2^{-\frac{2}{c}c\log n}=n^{-2}$, we have $\mathbb{E}[X]<1$.
	 By average, there exists a graph $G'$ such that $e(G')-X\geq \mathbb{E}[e(G(n,\delta))-X]>\delta\binom{n}{2}-1$.
	 Let $G$ be obtained from $G'$ by deleting one edge for each copy of $K_{t,t}$ in $G'$, then $G$ is an $n$-vertex $K_{t,t}$-free graph with at least $2^{-\frac{2}{c}}\binom{n}{2}$ edges. We have completed the proof.
\end{proof}

\subsection{Proof of Theorem \ref{AnoBi.Th}}	
	
\begin{proof}[Proof of Theorem \ref{AnoBi.Th}]

	Firstly, we claim that $ex(n,L(r,m))=o(n^{2})$.
	
	Let $\alpha=n^{-\frac{\varepsilon^{2}}{2r}}$, $t=\frac{r}{\varepsilon}$ and $u=m$. Then for sufficiently large $n$, we have $\alpha^{t}n-\binom{n}{r}(\frac{m}{n})^{t}\geq u$. So, for sufficiently large $n$ and any $n$-vertex graph $G$ with at least $\frac{1}{2}n^{2-\frac{\varepsilon^{2}}{r}}$ edges, there exists $U\subseteq V(G)$ with $|U|\geq u$ such that any $r$-set $S\subseteq U$ has at least $m$ common neighbors.
	
	Now we are going to find an $L(r,m)=(A\cup B,E)$ in such $G$.
	Let $\phi$ be any injection from $B$ to $U$, we only need to extend it to an injection from $A\cup B$ to $V(G)$ such that for any edge $ab$ in $L(r,m)$, $\phi(a)\phi(b)$ is an edge in $G$. Let $A'$ be a subset of $A$ and assume that we have already extend $\phi$ to an injection from $A'\cup B$ to $V(G)$ such that for any edge $ab$ between $A'$ and $B$, $\phi(a)\phi(b)$ is an edge in $G$. Take an vertex $v\in A\setminus A'$, then $\phi(N_{L(r,m)}(v))$ is a subset of $U$ with cardinality at most $r$. Take an $r$-set $S\subseteq U$ with $S'\subseteq S$ and let $T$ denote the set of common neighbors of $S$ in $G$. Then $|T|\geq m=|V(L(r,m))|$. Therefore $T\setminus \phi(A'\cup B)$ is not empty. We can choose an vertex $x$ in $T\setminus \phi(A'\cup B)$ and let $\phi(v)=x$. Then we can check that $\phi$ is an injection from $A'\cup\{v\}\cup B$ to $V(G)$ with the property that for any edge $ab$ between $A'\cup\{v\}$ and $B$, $\phi(a)\phi(b)$ is an edge in $G$. By induction, we get a desired $\phi$.
	
	Since for sufficiently large $n$, any $n$-vertex graph $G$ with at least $\frac{1}{2}n^{2-\frac{\varepsilon^{2}}{r}}$ edges contains a copy of $L(r,m)$, we know that $ex(n,L(r,m))=O(n^{2-\frac{\varepsilon^{2}}{r}})=o(n^{2})$. Then by the same argument in Theorem \ref{ComBi.Th}, we have
$\lim_{n\to\infty}\frac{\log M_{L(r,m)}(n)}{\binom{n}{2}}=1.$
\end{proof}

\subsection{Proof of Proposition \ref{grid.Pro}}

\begin{proof}[Proof of Proposition \ref{grid.Pro}]
	For any integers $m,n\geq 3$, $G_{m,n}$ is a 4-regular graph. Let $\mathcal{G}=\{G_{1},G_{2},\dots,G_{s}\}$ be a family of spanning subgraphs of $G_{m,n}$ such that the symmetric difference of any two members in $\mathcal{G}$ is connected. For each $1\leq i\leq s$, let $N_{i}$ denote the set of the neighbors of $(1,1)$ in $G_{i}$. Then $N_{1},N_{2},\dots,N_{s}$ must be pairwise distinct. If not, we may assume that $N_{1}=N_{2}$, then $(1,1)$ will be an isolated vertex in the symmetric difference of $G_{1}$ and $G_{2}$, which contradicts to the definition of $\mathcal{G}$. Note that $N_{1},N_{2},\dots,N_{s}$ are all subsets of $\{(1,2),(1,n),(2,1),(m,1)\}$. So we have $s\leq2^{4}=16$. Therefore, $M_{\mathcal{F}_{c}}(G_{m,n})\leq 16$.

The tight construction for the case $m=n=3$ can be found in the Appendix.
\end{proof}

\section{Concluding Remarks}

\begin{itemize}

	\item
	Theorem \ref{Tree.Th} shows that for all $3\leq\ell \leq\frac{n-1}{12\log n}+2$, the family $\mathcal{T}_{\ell}$ could not provide a positive answer to Problem~\ref{P3}. 
Actually, this is also true for all $\frac{n-1}{12\log n}+2\leq\ell\leq\frac{n+9}{6}$. To be more precise, if $n=2^{k}-1$ for some positive integer $k$ and Conjecture \ref{P1FC} holds for $n+1$, then for any integer $\ell\in[3,\frac{n+9}{6}]$, we have $M_{\mathcal{F}_{\mathcal{T}_{\ell}}}(n)\geq 2^{n-k-1}$. The proof of this statement is nearly the same as the proof of Theorem \ref{Tree.Th}. Firstly, we partition the edge set of $K_{n+1}$ into $n$ perfect matchings $M_{1},M_{2},\dots,M_{n}$ such that the union of any two of them forms a Hamiltonian cycle in $K_{n+1}$. 
Then we delete the edge adjacent to $n+1$ in $M_{i}$ for each $1\leq i\leq n$. 
Secondly, we take a subfamily $\mathcal{G}$ of the power set $2^{\{M_{1},\dots,M_{n}\}}$ with cardinality $2^{n-k-1}$ such that the symmetric difference of any two members in $\mathcal{G}$ contains at least 3 matchings in $\{M_{1},\dots,M_{n}\}$. 
This is guaranteed by the existence of Hamming code of length $2^{k}-1$ for $k\geq2$ (for a nice quick account on Hamming codes see e.g. \cite{Ber}). Finally, we only need to find a spanning tree with exactly $\ell$ leaves in the union of 3 matchings,
two of which forms a Hamiltonian path in $K_n$ and the other one is viewed as a set of $\frac{n-1}{2}$ additional edges.

\item We prove the phase transitions in Theorems \ref{kcopy.Th} and \ref{kvexd.Th}.
However, it seems that to find the precise rate limit for all values of $k$ is a difficult task.
To do so, one needs to obtain further knowledge about the extremal structure of $n$-vertex graphs that contain at most $cn^{v(L)}$ copies (or $cn$ vertex-disjoint copies) of $L$. 
However, this remains an open problem in extremal graph theory and only very few cases of determining the extremal structures are currently known (e.g.\cite{Raz},\cite{ABHP} and \cite{HLLYZ}).
	
	\item
	The construction in Proposition \ref{grid.Pro} shows that the general upper bound $M_{\mathcal{F}_{c}}(G_{m,n})\leq 16$ is sharp for $m=n=3$. It would be of interest to investigate whether this can occur for other values of $m$ and $n$. We leave this as an open problem.
	\begin{prob}
		Is it true that $M_{\mathcal{F}_{c}}(G_{m,n})=16$ holds for all $m,n\geq 3$?
	\end{prob}
	
\end{itemize}

\section*{Appendix}
\begin{figure}[H]
\centering

\subfigure[$G_{1}$]{
	\centering
	\includegraphics[scale=0.45]{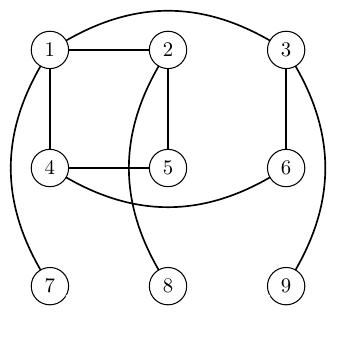}
}
\subfigure[$G_{2}$]{
	\centering
	\includegraphics[scale=0.45]{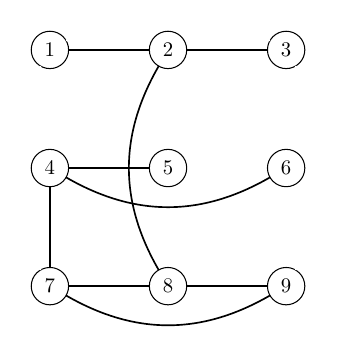}
}
\subfigure[$G_{3}$]{
	\centering
	\includegraphics[scale=0.45]{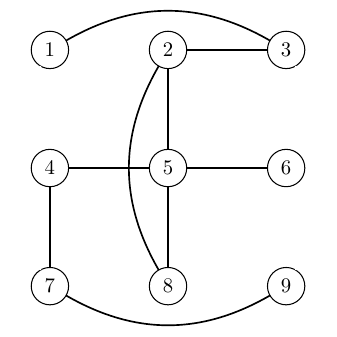}
}
\subfigure[$G_{4}$]{
	\centering
	\includegraphics[scale=0.45]{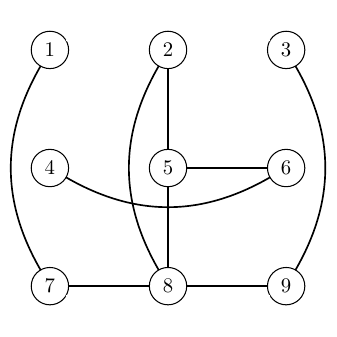}
}
\centering
\subfigure[$G_{5}$]{
	\centering
	\includegraphics[scale=0.45]{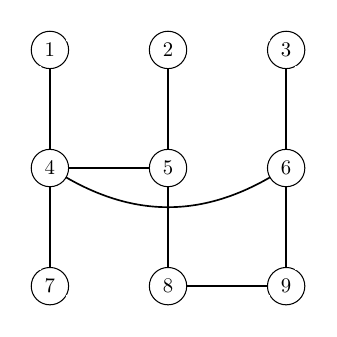}
}

\subfigure[$G_{6}$]{
	\centering
	\includegraphics[scale=0.45]{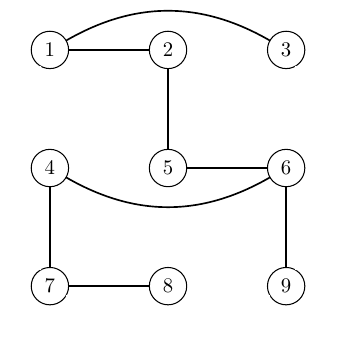}
}
\subfigure[$G_{7}$]{
	\centering
	\includegraphics[scale=0.45]{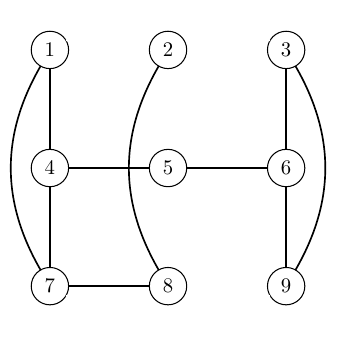}
}
\subfigure[$G_{8}$]{
	\centering
	\includegraphics[scale=0.45]{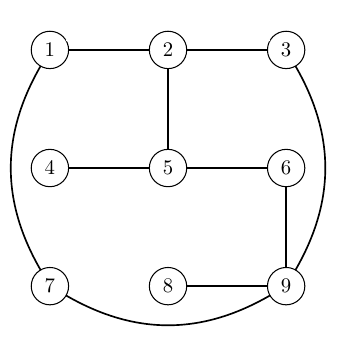}
}
\subfigure[$G_{9}$]{
	\centering
	\includegraphics[scale=0.45]{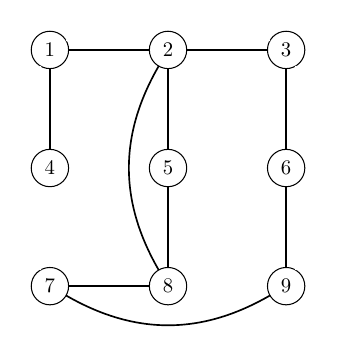}
}
\subfigure[$G_{10}$]{
	\centering
	\includegraphics[scale=0.45]{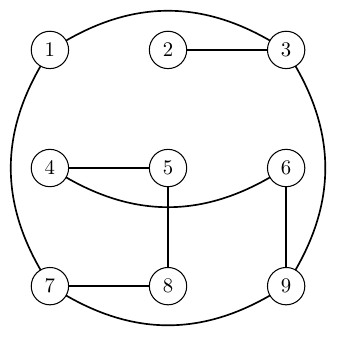}
}

\subfigure[$G_{11}$]{
	\centering
	\includegraphics[scale=0.45]{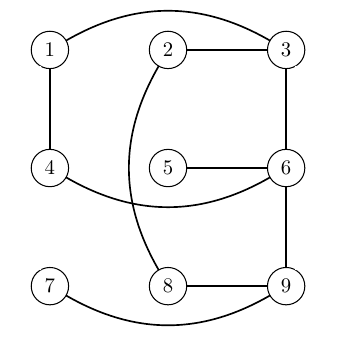}
}
\subfigure[$G_{12}$]{
	\centering
	\includegraphics[scale=0.45]{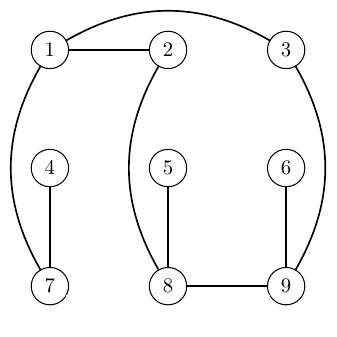}
}
\subfigure[$G_{13}$]{
	\centering
	\includegraphics[scale=0.45]{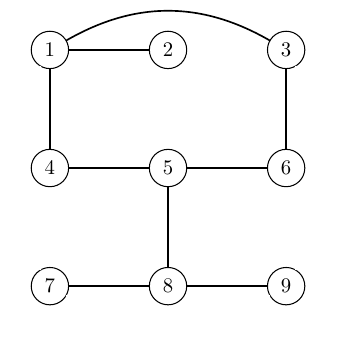}
}
\subfigure[$G_{14}$]{
	\centering
	\includegraphics[scale=0.45]{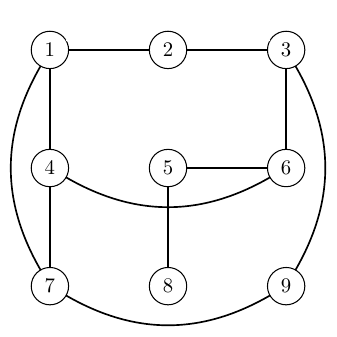}
}
\subfigure[$G_{15}$]{
	\centering
	\includegraphics[scale=0.45]{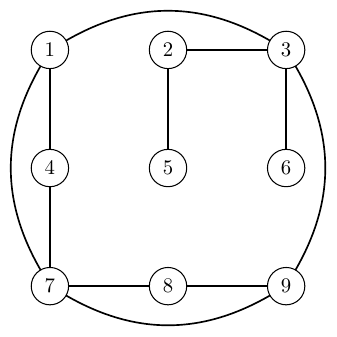}
}

\subfigure[$G_{16}$]{
	\centering
	\includegraphics[scale=0.45]{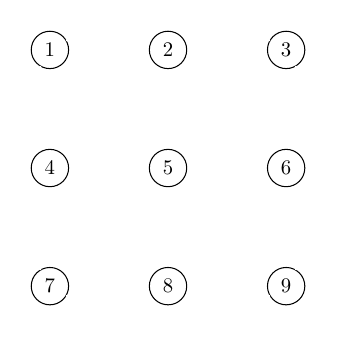}
}
\end{figure}	

\end{document}